\theoremstyle{plain}
\newtheorem{thm}[subsection]{Theorem}
\newtheorem{lem}[subsection]{Lemma}
\newtheorem{prop}[subsection]{Proposition}
\newtheorem{cor}[subsection]{Corollary}
\newcommand{\card}[1]{\ensuremath{|#1|}}
\newcommand{\Zp}[1]{\mathbb{Z}/p^{#1}\mathbb{Z}}
\theoremstyle{definition}
\newtheorem{rem}[subsection]{Remark}
\newtheorem{defn}[subsection]{Definition}
\def\a{\alpha}
\def\b{\beta}
\def\Z{\mathbb{Z}}
\def\ua{\underbar{a}}
\def\ub{\underbar{b}}
\def\a{\alpha}
\def\b{\beta}
 \title{On Hamiltonian cycles of power graphs of abelian groups}
\author{Himadri Mukherjee\footnote{email: himadrim@bits-pilani.goa.ac.in}}
\begin{document}
\maketitle

\begin{abstract}
In this article we discuss the question of presence of Hamiltonian cycle in the undirected power graph of a group. In the process we develop weighted Hamiltonian cycle concept and prove
a few general results regarding the Hamiltonian question.
\end{abstract}

\section{Introduction} Graphs associated to algebraic structures is in literature for a long time. Most notable of it all 
would probably be the Cayley graph associated to a group. Association of a graph to a group dates back to Cayley (see \cite{cayley}), and more recently to Dehn
in \cite{dehn}. Dehn reintroduced the Cayley graph of a group and proposed the word problem of the mapping class group of a surface. In the theory of Cayley graphs it
is a long standing problem to solve, the conjecture of Lovász, that a vertex transitive graph has a Hamiltonian path and in particular a
connected Cayley graph is Hamiltonian. It is an elementary exercise to write down a Hamiltonian cycle when an abelian group acts transitively on a graph. More generally
in the framework of Cayley graphs it is easier to get some results on existence of a  Hamiltonian cycle when the group is an abelian group. 

But the same problem for a non-abelian group still remains open despite repeated attempts by eminent mathematicians. So it is an
interesting problem to ask whether the power graph associated to a group is Hamiltonian. The answer to that question is no, as it
is trivial to see from the definition of a power graph that the power graph of $\mathbb{Z}/p\mathbb{Z}$ has a cut vertex when $p$ is
a prime.  The definition of power
graph of a finite group occurs in the works of Kelarev et.al in the paper \cite{kelarev}, where the directed power graph of a finite group 
was studied. For a finite group $G$ the graph $\mathcal{G}=(V,E)$ is the graph on vertex set $V=G$ where $(g,h)$ is a directed edge from $g$ to 
$h$ if there is a natural number $k$ such that $h=g^k$.  In the survey \cite{survey} one can find a very detailed bibliography of the articles
related to power graph of a finite group. 
 
In the paper \cite{ivy} the question of Hamiltonicity of undirected power graph of the group of units in the ring $\mathbb{Z}/n\mathbb{Z}$ is Introduced. 
It was shown in the same paper that when the number $n$ is a product
of more than one Fermat primes \cite{burton} the group of units $U(n)=\mathbb{Zn}^{*}$ as mentioned above does not give a Hamiltonian graph.
But the general question remains open whether the group of units in $\mathbb{Z}/n\mathbb{Z}=U_n$ gives a Hamiltonian graph. 
In this article we give results that will be instrumental to solve the question raised in \cite{ivy}. We also develop graph theoretic tools that are useful to
tackle such graphs, namely given in clusters of complete graphs. Also as a question in itself the weighted Hamiltonian question is very interesting and useful.




\section{Structure of Power Graphs}

In this section we define a few basic concepts and prove a few basic results for general power graphs of groups. 
\begin{defn}
 Power graph $P_G=(V,E)$ of a finite group $G$ is given by the vertex set $V$ and edge set $E=\{(g,h) : \exists k \in \mathbb{N} , g^k =h\}$.
\end{defn}
 Note that we have a natural direction on the edges of the power graph of a group. In this article we will consider only the undirected power 
 graph of a group as it is easy to observe that there cannot be any directed Hamiltonian cycle in a power graph (identity vertex is a sink). 
 But many a time we will consider the direction, although we will mention it whenever we intend it, to facilitate our study.

 \begin{lem}\label{edge_composition}
  In a directed power graph $G$ if $(x,y), (y,z)$ are two directed edges, then $(x,z)$ is a directed edge. 
 \end{lem}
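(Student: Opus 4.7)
The plan is essentially a direct unwinding of the definition of the (directed) power graph, so there is no real obstacle here.

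First I would invoke the definition of a directed edge in the power graph: the existence of the edge $(x,y)$ gives a natural number $k$ with $y = x^k$, and similarly the existence of $(y,z)$ gives a natural number $\ell$ with $z = y^\ell$. Next I would substitute: $z = y^\ell = (x^k)^\ell = x^{k\ell}$. Since $k\ell \in \mathbb{N}$, the number $k\ell$ witnesses that $(x,z)$ satisfies the edge condition, so $(x,z) \in E$ as a directed edge.

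The only thing worth flagging is a convention check on $\mathbb{N}$: as long as one insists $\mathbb{N}$ denotes the positive integers (so that $1 \in \mathbb{N}$ but $0$ is either excluded or else $g^0 = e$ is handled consistently), the product $k\ell$ remains in $\mathbb{N}$ and the witness is legitimate. No case analysis on whether $x$, $y$, or $z$ equals the identity is needed, since the argument uses only the monoid-exponent identity $(x^k)^\ell = x^{k\ell}$, which holds in any group. In short, the lemma is an immediate corollary of the fact that the relation $h \in \langle g \rangle$ is transitive, expressed at the level of exponents by multiplicativity.
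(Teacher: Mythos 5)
Your proof is correct and is essentially the paper's own argument: both take witnesses $k$ and $\ell$ for the two edges and multiply the exponents to get $z = x^{k\ell}$. Nothing further is needed.
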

\begin{proof}
 There are $n_x, n_y \in \mathbb{N}$ such that $x^{n_x}=y \mbox{ and } y^ {n _y }= z$ so putting these two together we have $x^{n_x n_y}=z$.
\end{proof}
Edges obtained in a process described in the above lemma will be called transitive edges.
\begin{lem} Let $G$ be a group, if $(g,h)$ and $(h,g)$, directed edges, exist in the power graph of $G$ then $o(g)=o(h)$. 
\end{lem}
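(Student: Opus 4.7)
The plan is to unpack the hypothesis using the definition of the (directed) power graph and then show the two cyclic subgroups $\langle g\rangle$ and $\langle h\rangle$ coincide, from which equality of orders is immediate.

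First, from $(g,h)$ being a directed edge there is some $n_1\in\mathbb{N}$ with $h=g^{n_1}$, and from $(h,g)$ being a directed edge there is some $n_2\in\mathbb{N}$ with $g=h^{n_2}$. The first equation shows $h\in\langle g\rangle$, so $\langle h\rangle\subseteq\langle g\rangle$. Symmetrically, the second equation shows $g\in\langle h\rangle$, so $\langle g\rangle\subseteq\langle h\rangle$. Combining the two inclusions yields $\langle g\rangle=\langle h\rangle$, and consequently
\[
o(g)=|\langle g\rangle|=|\langle h\rangle|=o(h).
\]

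As an alternative finish (useful if one prefers to argue numerically rather than set-theoretically), substituting one relation into the other gives $g=g^{n_1 n_2}$, so $g^{n_1 n_2-1}=e$ and hence $o(g)\mid n_1 n_2-1$. In particular $\gcd(n_1,o(g))=1$, and the standard formula for the order of a power in a cyclic group gives $o(h)=o(g^{n_1})=o(g)/\gcd(n_1,o(g))=o(g)$.

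There is no real obstacle here: the statement is essentially a restatement of the fact that mutual divisibility of cyclic subgroups forces them to be equal. The only thing to be careful about is to make sure $n_1,n_2$ are taken to be positive integers (as built into the definition of the power graph in this paper), so that the substitution $g=(g^{n_1})^{n_2}=g^{n_1 n_2}$ is unambiguous and no issues about negative exponents arise.
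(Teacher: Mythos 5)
Your proof is correct; the paper in fact states this lemma without any proof, evidently regarding it as immediate. Your argument via mutual containment of the cyclic subgroups $\langle g\rangle$ and $\langle h\rangle$ (with the order-of-a-power computation as an alternative) is exactly the standard argument the authors leave to the reader, so there is nothing to add.
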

\begin{proof}
 Note that order $o(h^k)$ of the element $h^k$ is $\frac{o(h)}{gcd(o(h),k))}$ and then it follows easily.
\end{proof}

\subsection{Cluster graphs}
Throughout this section we will consider the directed power graph of a group.
\begin{defn}A cluster in a power graph $G$ is a strongly connected component in the associated directed power graph.
 
\end{defn}
Note that a natural graph structure precipitates on the set of clusters $\mathcal{C}$ of the power graph  $G=(V,E)$ as the quotient 
graph under the equivalence relation defined by the strong 
components, namely if $C_1,C_2$ are two clusters $(C_1,C_2)$ is an edge if there are $x \in C_1, y \in C_2$ such that $(x,y) \in E$. 
Note that once such an edge exist there will be
edges between every pair $(z,w)$ , $z \in C_1, w \in C_2$.
Also note that this graph structure acquires a natural direction on the edges and further note that by the lemma \ref{edge_composition} this 
direction on the edges of the clusters is not reversible, otherwise the two clusters will be one unified strong componnent. 
We will call this the cluster graph of the power graph $G$.
The induced subgraph on the set of vertices of a cluster $c$ is a directed complete subgraph of the graph $G$, as a result the graph
$G$ is totally determined 
if the cluster graph and the size of the clusters are known. 
The cluster graph with the vertex weight defined by the size of the cluster totally characterizes the graph $G$. Let $G=(V,E)$ be a vertex
weighted graph with vertex weight $w: V \rightarrow \mathbb{N}$, 
we define $\widetilde{G^{w}}$ as a the graph on the set of vertices $\cup_{v \in V}\{v\}\times [w(v)]$ ($[n]=\{1,2, \ldots , n\})$ with $((v,i),(u,j))$ an edge 
if either $v=u$ and $i \neq j$ or $(v,u) \in E$, this operation is reversible since the original graph $G$ is the quotient graph of the graph $\widetilde{G^{w}}$
where $(v,i)$ is equivalent to
$(u,j)$ if $v=u$. Thus a power graph $G$ is isomorphic to the graph $\widetilde{\mathcal{C}^{w}}$ where $\mathcal{C}$ is the cluster graph and $w(c)=\card{c}$. 
We sum up the above discussion in the following three brief lemmas, the proofs of which are more or less automatic.
\begin{lem}
 If $c_1,c_2,c_3$ such that $(c_1,c_2)$ and $(c_2,c_3)$ are directed cluster edges, then there is a directed cluster edge $(c_1,c_3)$.
\end{lem}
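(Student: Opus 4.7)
The plan is to unpack the definition of a directed cluster edge and reduce the claim to Lemma \ref{edge_composition} applied to carefully chosen representatives, using the strong-component structure of a cluster to bridge between the two given cluster edges.

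First I would translate the hypothesis into statements about the underlying power graph. A directed cluster edge $(c_1,c_2)$ means that $c_1\neq c_2$ and there exist $g_1\in c_1$ and $g_2\in c_2$ with $(g_1,g_2)$ a directed edge in $G$. Similarly the edge $(c_2,c_3)$ supplies some $g_2'\in c_2$ and $g_3\in c_3$ with $(g_2',g_3)$ a directed edge. The only mild subtlety is that $g_2$ and $g_2'$ need not coincide, but since both lie in the strong component $c_2$ there is a directed path $g_2 = h_0 \to h_1 \to \cdots \to h_m = g_2'$ inside $c_2$.

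Next I would iterate Lemma \ref{edge_composition} along the directed path $g_1\to g_2 = h_0\to h_1\to\cdots\to h_m = g_2'\to g_3$. Each successive application collapses two consecutive directed edges into a single directed edge, and after finitely many steps we obtain a directed edge $(g_1,g_3)$ in the power graph. By the definition of the cluster graph, this produces a directed cluster edge from $c_1$ to $c_3$ provided $c_1\neq c_3$.

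Finally I would dispose of the case $c_1=c_3$. If that equality held, the two given cluster edges $(c_1,c_2)$ and $(c_2,c_1)$ would combine, via the construction above, with directed paths inside $c_1$ and $c_2$ to produce directed paths in both directions between any vertex of $c_1$ and any vertex of $c_2$; hence $c_1\cup c_2$ would be a single strong component, forcing $c_1=c_2$ and contradicting that $(c_1,c_2)$ is an edge of the (quotient) cluster graph on distinct clusters. This step is the only real subtlety, and it is essentially bookkeeping; the heart of the argument is simply the transitivity already established in Lemma \ref{edge_composition}.
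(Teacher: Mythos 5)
Your proof is correct and takes essentially the same route the paper intends: the paper leaves this lemma as ``automatic,'' resting on Lemma \ref{edge_composition}, and your argument is just a careful spelling-out of that transitivity, bridging the two representatives in $c_2$ by a directed path inside the strong component and iterating the lemma. The extra check that $c_1\neq c_3$ matches the paper's earlier remark that cluster-edge directions are not reversible, so nothing is missing.
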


\begin{lem}Let $c_1,c_2$ be two clusters in a graph $G$ and let $v_i \in c_i$ be vertices such that $(v_1,v_2)$ is an edge in $G$, 
then for every $v \in c_1$ and $v' \in c_2$ there are edges
$(v,v')$ in $G$.
\end{lem}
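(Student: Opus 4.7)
The plan is to prove this by two applications of the transitivity lemma \ref{edge_composition}, using the fact that inside a strong component every pair of vertices is actually joined by a direct edge in each direction.

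First I would pin down the internal structure of a cluster. Since $c_1$ is a strong component of the directed power graph and both $v, v_1 \in c_1$, there is a directed path from $v$ to $v_1$ and a directed path from $v_1$ to $v$. Applying \ref{edge_composition} inductively along each path collapses it to a single edge, so $(v, v_1)$ is a directed edge of $G$ (and likewise $(v_1, v)$ is). The same argument applied to the strong component $c_2$ produces a directed edge $(v_2, v')$ for our chosen $v' \in c_2$.

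Next I would assemble the three edges $(v, v_1)$, $(v_1, v_2)$, $(v_2, v')$ using \ref{edge_composition} twice. Concretely, $(v, v_1)$ together with the hypothesised bridging edge $(v_1, v_2)$ yields a directed edge $(v, v_2)$; combining this with $(v_2, v')$ produces the desired edge $(v, v')$. Because $v$ and $v'$ were arbitrary elements of $c_1$ and $c_2$, this establishes the claim.

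I do not expect any real obstacle here. The only subtlety worth being careful about is the very first step: the statement only tells us $v$ and $v_1$ lie in the same strong component, so one must use the existence of a directed path (not a single edge) and invoke \ref{edge_composition} to compress it. Once one accepts that ``strong component'' in a power graph collapses to a directed tournament via transitivity, the rest is two line applications of the same lemma.
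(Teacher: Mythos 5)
Your proof is correct and follows essentially the same route as the paper, whose entire proof is ``Follows from the lemma \ref{edge_composition}''; you have simply spelled out the details (compressing the directed paths inside each strong component via transitivity, then composing with the bridging edge), which is exactly the intended argument.
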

\begin{proof} Follows from the lemma \ref{edge_composition}.
 
\end{proof}
\begin{lem} Let $G$ be a directed graph, let $C(G)$ be its cluster graph with $w(c)=\card{c}$ vertex weights, then $G\simeq \widetilde{C(G)^w}$.
 
\end{lem}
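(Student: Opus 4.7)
The plan is to exhibit an explicit isomorphism $\phi : G \to \widetilde{C(G)^w}$ and verify that it respects the (directed) edge relation in both directions. This essentially repackages the discussion preceding the lemma, but I would make the map concrete so that every claim can be checked by inspection of definitions.

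First I would fix, for each cluster $c \in C(G)$, an arbitrary enumeration $c = \{v_{c,1}, \ldots, v_{c,w(c)}\}$ of its vertices, which is possible because $w(c) = |c|$. Define $\phi(v_{c,i}) = (c,i)$. Since the clusters partition $V(G)$ and the vertex set of $\widetilde{C(G)^w}$ is precisely $\bigcup_{c} \{c\} \times [w(c)]$, the map $\phi$ is a bijection on vertices.

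Next I would verify that $\phi$ carries edges to edges. Let $(u,v)$ be a directed edge of $G$, with $u \in c_1$, $v \in c_2$. If $c_1 = c_2$, then $\phi(u)$ and $\phi(v)$ have the same first coordinate but distinct second coordinates, so they are adjacent in $\widetilde{C(G)^w}$ by the first clause of the definition. If $c_1 \ne c_2$, then by definition of the quotient cluster graph $(c_1,c_2)$ is an edge in $C(G)$, whence $(\phi(u),\phi(v))$ is an edge of $\widetilde{C(G)^w}$ by the second clause. Conversely, given an edge $((c_1,i),(c_2,j))$ in $\widetilde{C(G)^w}$, if $c_1 = c_2$ then $v_{c_1,i}$ and $v_{c_1,j}$ lie in a common strong component and are therefore joined by a directed edge in $G$; if $c_1 \ne c_2$ then $(c_1,c_2)$ is an edge in $C(G)$, which by definition means some pair of representatives is joined, and the preceding lemma upgrades this to an edge between every pair, in particular $(v_{c_1,i}, v_{c_2,j})$.

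I do not anticipate a real obstacle here: everything reduces to unpacking the definitions of the cluster graph, of $\widetilde{(\cdot)^w}$, and of a strong component. The only subtle point is keeping track of directions, namely ensuring that within a cluster one really has edges both ways (which is precisely the definition of a strong component) while between clusters the direction is forced by Lemma \ref{edge_composition} and the cluster-edge lemma, so that the completeness of the bipartite pattern across two clusters is not just an assumption but a consequence of those earlier lemmas.
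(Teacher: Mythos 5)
Your proposal is correct and is essentially the paper's own argument made explicit: the paper offers no proof here (it declares these lemmas ``more or less automatic''), and your cluster-by-cluster bijection together with the two-way edge check is exactly the discussion preceding the statement. One precision worth adding: that any two vertices of a common cluster are joined by a directed edge is not ``precisely the definition of a strong component'' (that definition only gives directed paths in both directions); it requires Lemma \ref{edge_composition} to compose those paths into single edges --- the same transitivity you already invoke between clusters --- and it is the reason the lemma really concerns directed power graphs (transitively closed digraphs) rather than arbitrary directed graphs, since for, say, a directed $4$-cycle the reconstruction $\widetilde{C(G)^w}$ would be complete on four vertices while $G$ is not.
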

 Throughout in this article, unless otherwise stated, we will assume that the product of two graphs is given in the following manner. 
 \begin{defn}\label{graph_product}
  Let $G_i=(V_i,E_i) \mbox{ for } i = 1,2 $ be two graphs, then the product graph is defined as the graph $G_1 \times G_2= ( V_1 \times V_2 , E)$ where 
  $((v_1,w_1), (v_2,w_2)$ defines an edge if 
  $v_1=v_2 \mbox{ and } (w_1,w_2) \in E_2 \mbox{ or } (v_1,v_2) \in E_1 \mbox{ and } w_1=w_2 \mbox{ or } (v_1,v_2) \in E_1 \mbox{ and } (w_1,w_2) \in E_2$. 
 \end{defn}

\begin{lem}
 Let $G_1,G_2$ be two directed graphs and $C(G_i)$ be the corresponding cluster graphs then $C(G_1\times G_2)=C(G_1) \times C(G_2)$.
\end{lem}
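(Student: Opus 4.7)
The plan is to exhibit a graph isomorphism $\Phi : C(G_1\times G_2)\to C(G_1)\times C(G_2)$ defined on vertices by sending the strong component of $(v,w)$ to the pair $([v]_{G_1},[w]_{G_2})$, and then to check both that $\Phi$ is a bijection on vertices and that it respects the edge relation. This reduces the claim to two standard checks: strong components of the product are products of strong components, and the three defining cases for edges in the product of graphs agree at both levels.

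For the vertex bijection, the easy direction is projection. Any directed edge of $G_1\times G_2$ falls into one of the three cases in the definition of the product graph, and in each case projecting to coordinate $i$ yields either an edge of $G_i$ or the trivial equality, so directed paths in $G_1\times G_2$ project to directed walks in each factor. Hence strongly connected vertices of the product project to strongly connected vertices in each factor, which shows $\Phi$ is well-defined and injective. For surjectivity and the reverse direction, suppose $v_1,v_2$ lie in a common strong component of $G_1$ with directed paths $v_1\rightsquigarrow v_2$ and $v_2\rightsquigarrow v_1$, and similarly for $w_1,w_2$ in $G_2$. Build an L-shaped walk from $(v_1,w_1)$ to $(v_2,w_2)$ by first using edges of the type ``$(u,u')\in E_1$ and second coordinate fixed at $w_1$'' along the $G_1$-path, then edges of the type ``first coordinate fixed at $v_2$ and $(x,x')\in E_2$'' along the $G_2$-path; running the return paths gives the reverse walk, so $(v_1,w_1)$ and $(v_2,w_2)$ are strongly equivalent in $G_1\times G_2$.

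For the edge check, by the lemma on cluster-to-cluster edges, an edge between two clusters of $G_1\times G_2$ exists iff an edge exists between any chosen representatives, and similarly in each factor. So unpacking: an edge from cluster $(C_1,C_1')$ to a distinct cluster $(C_2,C_2')$ in $C(G_1)\times C(G_2)$ is, by definition of the product of graphs, precisely one of the three cases involving edges or equalities in the factor cluster graphs, which translates to exactly one of the three cases for an edge in $G_1\times G_2$ between any representatives. The main obstacle, and really the only point requiring care, is the degenerate bookkeeping in the L-shape construction: when $v_1=v_2$ or $w_1=w_2$ one of the two legs is an empty walk, and when intermediate vertices of the chosen factor paths coincide the product walk may revisit vertices. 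Both are harmless — empty walks trivially connect a vertex to itself, and repeated vertices are allowed in walks witnessing strong equivalence — so the isomorphism $\Phi$ is established and the equality $C(G_1\times G_2)=C(G_1)\times C(G_2)$ follows.
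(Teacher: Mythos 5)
Your proposal is correct, and it supplies exactly the verification the paper omits: this lemma is one of the ``three brief lemmas'' whose proofs the paper declares to be automatic, so there is no written proof to compare against, and your condensation argument (strong components of the strong product are products of strong components, then match the three edge cases) is the natural way to fill that gap. Two small points would tighten it. First, your labels are swapped: the projection argument shows $\Phi$ is \emph{well-defined} (and surjectivity is immediate, since $([v],[w])=\Phi([(v,w)])$), while the L-shaped walk is what gives \emph{injectivity}, i.e.\ that $v\sim v'$ and $w\sim w'$ force $(v,w)\sim(v',w')$; all the needed implications are present, only the bookkeeping names are off. Second, in the edge check the translation is not literally case-by-case: a product edge of the third type whose $E_1$-edge joins two vertices lying in the \emph{same} cluster of $G_1$ descends to an equality of clusters in the first coordinate, hence to a case-1 or case-2 edge of $C(G_1)\times C(G_2)$ rather than a case-3 edge; this is harmless for the conclusion but should be said, since otherwise the claimed ``exactly one of the three cases'' correspondence is false as stated. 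Relatedly, for the biconditional ``edge between clusters iff edge between any chosen representatives'' you are implicitly using the paper's lemma that a single edge between clusters propagates to all representative pairs, which the paper proves via edge composition and hence only for power graphs; for the general directed graphs in the statement you should instead use the weaker (and sufficient) fact that a cluster edge means an edge between \emph{some} representatives, which is just the definition of the quotient graph and is all your argument actually needs.
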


If a cluster edge is not obtained as a transitive edge then it is called an irreducible edge. 
The directed cluster graph with irreducible edges and the size of each cluster totally determines
the original graph since all the edges could be reconstructed by taking the transitive closure of 
the irreducible edges so the cluster graph with cluster sizes and irreducible edges totally determine 
the original graph. As a convention for drawing figures, whenever there is not a chance of ambiguity, our edges in a cluster graph will be oriented from bottom to top.
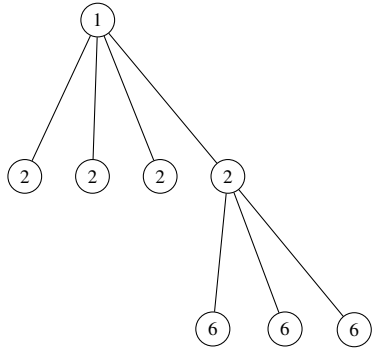
\begin{figure}
\centering
\begin{tikzpicture}[every node/.style={circle, draw, scale=.6}, scale=1.0, rotate = 180, xscale = -1]

\node (1) at ( 3.12, 2.77) {1};
\node (2) at ( 2.15, 4.85) {2};
\node (3) at ( 3.05, 4.85) {2};
\node (4) at ( 3.95, 4.85) {2};
\node (5) at ( 4.85, 4.85) {2};
\node (6) at ( 4.65, 6.88) {6};
\node (7) at ( 5.61, 6.88) {6};
\node (8) at ( 6.53, 6.89) {6};

\draw (2) -- (1);
\draw (3) -- (1);
\draw (4) -- (1);
\draw (5) -- (1);
\draw (6) -- (5);
\draw (7) -- (5);
\draw (8) -- (5);

\end{tikzpicture}
\caption{cluster graph with irreducible edges of $\mathbb{Z}_{3^2}\times \mathbb{Z}_3$}
\label{fig:1}
\end{figure}

 \begin{defn}
  A graph $G=(V,E)$ along with a map $f: E \rightarrow \mathcal{P}(\mathbb{N})$, is called a power graph if there is a group $H$ such that $G=P_H$ and $f(g,h)=\{k \in \mathbb{N} : g^k=h\}$.
 \end{defn}

 Let us also define a product of power graphs in the following manner which we will call strong product. 
 \begin{defn}\label{strong_prod}
  Let $G_1,G_2$ be two power graphs and let $f_1,f_2$ be two edge functions respectively, then we define a product
  $G_1 \boxtimes G_2=(G_1\times G_2, E_1 \boxtimes E_2)$ where $E_1 \boxtimes E_2 $ is given by the edges $((v_1,x_1),(v_2,x_2))$ is an 
  edge if $f_1(v_1, x_1) \cap f_2(v_2,x_2) \neq \emptyset$.
 \end{defn}

 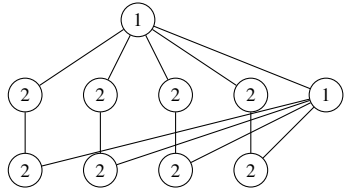
\begin{figure}[h]
\centering
\begin{tikzpicture}[every node/.style={circle, draw, scale=.6}, scale=1.0, rotate = 180, xscale = -1]

\node (21) at ( 0, 0) {2};
\node (22) at ( 1, 0) {2};
\node (23) at ( 2, 0) {2};
\node (24) at ( 3, 0) {2};
\node (25) at ( 0, -1) {2};
\node (26) at ( 1, -1) {2};
\node (27) at ( 2, -1) {2};
\node (28) at ( 3, -1) {2};
\node (29) at ( 4, -1) {1};
\node (30) at ( 1.5, -2) {1};

\draw  (30) -- (25);
\draw (30) -- (26);
\draw (30) --(27);
\draw (30) -- (28);
\draw (30) -- (29);
\draw (21) -- (25);
\draw (22) -- (26);
\draw (23) -- (27);
\draw (24) -- (28);

\draw (21) -- (29);
\draw (22) -- (29);
\draw (23) -- (29);
\draw (24) -- (29);

\end{tikzpicture}
\caption{Cluster graph of $\mathbb{Z}_3 \times \mathbb{Z}_3 \times \mathbb{Z}_2$}
\label{fig:2}
\end{figure}

 \begin{thm}
  If the groups $G,G_1,G_2$ related as $G=G_1\times G_2 \mbox{ then } P_{G_1} \boxtimes P_{G_2}=P_{G} $, where $P_G$ denotes the power graph of the group $G$.
 \end{thm}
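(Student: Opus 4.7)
The plan is to establish equality of two edge-labeled graphs on the same vertex set by unpacking the definitions directly; the theorem is essentially a bookkeeping statement encoding the fact that exponentiation in a direct product is coordinatewise with a \emph{common} exponent.

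First I would identify the vertex sets. By the definition of $\boxtimes$, the vertex set of $P_{G_1} \boxtimes P_{G_2}$ is $V(P_{G_1}) \times V(P_{G_2}) = G_1 \times G_2$, which is exactly the vertex set of $P_G$. Under this canonical identification, writing a vertex of $P_{G_1} \boxtimes P_{G_2}$ as $(v,x)$ with $v \in G_1$, $x \in G_2$ corresponds to the element $(v,x) \in G_1 \times G_2$.

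Next I would verify that the edges match. Let $f_1, f_2, f$ be the edge-labeling functions of $P_{G_1}$, $P_{G_2}$, and $P_G$ respectively. The key algebraic observation is that in the direct product $G_1 \times G_2$,
\begin{equation*}
(v_1, x_1)^k = (v_2, x_2) \iff v_1^k = v_2 \text{ and } x_1^k = x_2,
\end{equation*}
so $f\bigl((v_1,x_1),(v_2,x_2)\bigr) = f_1(v_1,v_2) \cap f_2(x_1,x_2)$. Therefore $((v_1,x_1),(v_2,x_2))$ is an edge of $P_G$ iff this intersection is nonempty, which is precisely the edge condition of $P_{G_1} \boxtimes P_{G_2}$. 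The identification then preserves both edges and edge labels, giving $P_{G_1} \boxtimes P_{G_2} = P_G$ as edge-labeled graphs.

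There is no real obstacle here; the only subtlety worth flagging is that the equality holds at the level of \emph{labeled} power graphs, not merely of underlying graphs: one genuinely needs the common exponent $k$, and if one instead took the ordinary graph product (allowing independent $k$'s in each factor) the statement would fail. So the proof amounts to checking that the definition of $\boxtimes$ has been set up to record exactly the coordinatewise-with-common-exponent condition, which it has.
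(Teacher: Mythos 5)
Your proposal is correct and follows essentially the same route as the paper: both unpack the definitions to show that an edge in $P_{G}$ corresponds to a common exponent $k$ working coordinatewise, i.e.\ to $f_1(v_1,v_2)\cap f_2(x_1,x_2)\neq\emptyset$, which is exactly the edge condition of $\boxtimes$. Your additional remark that the label sets themselves satisfy $f=f_1\cap f_2$ is a harmless slight strengthening of the paper's edge-set equality.
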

\begin{proof}
 Only thing that we have to prove is that the edge sets are equal for the both sides. In other words we have to prove that $E_1 \boxtimes E_2= E$, 
 where $E$ is that edge set of the graph $G$. Let $(x,y), (g,h) \in G \mbox{ such that there is an edge }$ $ ((x,y),(g,h)) \in E$. Or equivalently we
 have a natural number $k$ such that 
 $(x,y)^k=(g,h) \mbox{ or } x^k=g \mbox{ and } y^k =h \iff (x,g) \in E_1 \mbox{ and } (y,h) \in E_2 \mbox{ and } f_1(x,g) \cap f_2 (y,h) \neq \emptyset ( \mbox{ since it contains } k ) \iff ((x,y),(g,h)) \in E_1 \boxtimes E_2$. 
\end{proof}

 \begin{thm}
  Let $G_1, G_2$ be two groups with coprime orders then $P_{G_1} \boxtimes P_{G_2}= P_{G_1} \times P_{G_2}$, where $P_{G_1} \times P_{G_2}$ is the product of graphs defined in \ref{graph_product}.
 \end{thm}
 \begin{proof}
  We just have to show that $E_1 \boxtimes E_2 = E_1 \times E_2$ to show that it is enough to show that for any 
  $(x,y) \in E_1 \mbox{ and } (g,h) \in E_2 \, , f_1(x,y) \cap f_2(g,h) \neq \emptyset$. Equivalently, there exists $k_1$ and $k_2$ such that 
$x^{k_1}=y$ and $g^{k_2}=h$. So $f_1(x,y)=\{k_1+m_1 o(x) : m_1\in \mathbb{Z}\}$, $f_2(g,h)=\{k_2+m_2 o(g) :m_2\in \mathbb{Z}\}$, where $o(x)$ and $o(g)$
are prime to each other. So we have to show that there is a solution of the following equations: $x\equiv k_1(\mbox{ mod }o(x))$ and $x\equiv k_2(\mbox{ mod }o(g))$.
Now $o(x)$ and $o(g)$ being co-prime, by Chinese Remainder theorem we get a solution of the above equations say the solution be $k$. Therefore $x^k=y$ and $g^k=h \Rightarrow (x,g)^k=(y,h)$. Hence follows the theorem. 
 \end{proof}
The following few are a few relevant general theorems about Hamiltonian graphs that will be useful later. 
 \begin{thm} \label{product_hamiltonian}
  Let $G_1, G_2$ be two Hamiltonian graphs then $G_1 \times G_2$ is Hamiltonian. 
 \end{thm}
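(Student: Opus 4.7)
The plan is to exhibit an explicit Hamiltonian cycle on $G_1\times G_2$ by a standard ``snake'' or zigzag construction along the two given Hamiltonian cycles. Note that the product defined in the paper is the strong product: in addition to the Cartesian product edges ($v_1=v_2$ with $(w_1,w_2)\in E_2$, or $(v_1,v_2)\in E_1$ with $w_1=w_2$), one also has the diagonal edges where both coordinates move along an edge of the respective graph. This third type of edge will be the key flexibility that makes the construction close up regardless of parity.

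First I would fix a Hamiltonian cycle $v_1,v_2,\dots,v_n,v_1$ in $G_1$ and $w_1,w_2,\dots,w_m,w_1$ in $G_2$. I would then describe a spanning closed walk on $V_1\times V_2$ as follows. Starting at $(v_1,w_1)$, traverse the first row $\{v_1\}\times V_2$ in the order $w_1,w_2,\dots,w_m$ using edges of $G_2$; then step from $(v_1,w_m)$ to $(v_2,w_m)$ using the edge $(v_1,v_2)\in E_1$ in the first coordinate; then traverse the second row $\{v_2\}\times V_2$ in reverse order $w_m,w_{m-1},\dots,w_1$; step to $(v_3,w_1)$; and continue this zigzag through all $n$ rows. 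This visits each vertex exactly once and uses only edges of the first two types.

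The main obstacle is the closing edge: after the last row $\{v_n\}\times V_2$ has been traversed, we must return to $(v_1,w_1)$. If $n$ is odd the walk terminates at $(v_n,w_m)$, and if $n$ is even it terminates at $(v_n,w_1)$. In the even case we close the cycle with the Cartesian edge $((v_n,w_1),(v_1,w_1))$, which exists because $(v_n,v_1)\in E_1$. In the odd case the Cartesian edges do not suffice, and this is exactly where the diagonal edges of the strong product are used: since $(v_n,v_1)\in E_1$ and $(w_m,w_1)\in E_2$, the third clause of the product definition guarantees that $((v_n,w_m),(v_1,w_1))$ is an edge. Hence in either case the walk closes up into a Hamiltonian cycle.

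I would conclude by noting that the construction is entirely combinatorial, and that the diagonal edge clause is used essentially only at the closing step, so the same argument shows that the Cartesian product of two Hamiltonian graphs is Hamiltonian whenever at least one factor has even order.
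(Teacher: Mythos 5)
Your construction is correct: the zigzag visits every vertex of $V_1\times V_2$ exactly once, the within-row steps use edges of the $G_2$-cycle, the row-to-row steps use the edges $(v_i,v_{i+1})\in E_1$, and the parity analysis of the closing step is right --- for $n$ even the Cartesian edge $((v_n,w_1),(v_1,w_1))$ closes the cycle, while for $n$ odd the diagonal (strong-product) edge $((v_n,w_m),(v_1,w_1))$, available because $(v_n,v_1)\in E_1$ and $(w_m,w_1)\in E_2$, does the job. The paper's proof is the same general layer-by-layer idea but executed differently: it traverses every copy of the $G_1$-cycle in the \emph{same} direction at each level $w_i$ and uses a diagonal edge of the strong product at \emph{every} level change (end of $\pi_i$ at $(v_{n-1},w_i)$ to start of $\pi_{i+1}$ at $(v_0,w_{i+1})$), whereas your boustrophedon alternates direction and needs a diagonal edge at most once, at the final closing step. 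The paper's version is slightly shorter and needs no parity case split; yours isolates exactly where the strong product is essential, which is why it also yields your closing observation that the weaker Cartesian product is already Hamiltonian when one factor has even order --- a refinement the paper's argument does not give. Both proofs are valid, and both tacitly assume the factors have at least three vertices, which is forced by the existence of Hamiltonian cycles anyway.
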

\begin{proof}
 
 Fix a Hamiltonian cycle $v_0,v_1,\ldots, v_n=v_0$ in the graph $G_1$, let us also fix a Hamiltonian cycle $w_0,w_1,\ldots, w_m=w_0$ in the graph $G_2$. 
 We define a Hamiltonian cycle in the graph $G_1 \times G_2$ in the following way. Define paths $\pi_i=(v_0,w_i),(v_1,w_i),\\ \ldots , (v_{n-1},w_i)$ on the product graph. 
 Note that there is an edge in the product graph between the end vertex of the path $\pi_i$ and the start vertex of the path $\pi_{i+1}$ for each $i \leq m-1$.
 And the end vertex of the path $\pi_m$ has an edge to the start vertex of the path $\pi_1$. So the concatenation $\pi_0\pi_1\ldots \pi_m$ is a Hamiltonian cycle in the product graph.
 
 \end{proof}

 \begin{defn}\label{equiv_graph}
  Let $G=(V,E)$ be a graph and let $\sim$ be an equivalence relation on the set of vertices $V$  such that if $v \sim w$ and $(w,w') $ is an edge then $v \sim w'$, 
  the quotient graph $G/\sim = (V', E')$ where $V'=\{[v] : v \in V \}$ 
  where $[v]$ stands for the equivalence class of the vertex $v$. $([v],[w]) \in E'$ if for any vertex $v_1 \in [v] \mbox{ there is a vertex } w_1 \in [w]$ such that $(v_1,w_1) \in E$. 
 \end{defn}

\begin{thm}\label{equiv_hamiltonian}
 Let $G=(V,E)$ be a graph and let $\sim$ be an equivalence relation on the set of vertices $V$.
 Let $[v]$ stand for the equivalence class of the vertex $v$.
 If the quotient graph $G/ \sim$ and the induced subgraphs on equivalence classes $[v]$ are Hamiltonian for all $v \in V$ then $G$ is Hamiltonian.
\end{thm}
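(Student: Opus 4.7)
The plan is to stitch together the internal Hamiltonian cycles of each equivalence class along a Hamiltonian cycle of the quotient $G/\sim$. I would fix a Hamiltonian cycle $[v_0] \to [v_1] \to \cdots \to [v_k] \to [v_0]$ in $G/\sim$, and for each $i$ let $H_i$ denote a Hamiltonian cycle of the subgraph induced on $[v_i]$. The key observation is that deleting any single edge of $H_i$ yields a Hamiltonian path of $[v_i]$ between the two endpoints of that edge.

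I would then try to choose, for each $i$, an entry vertex $u_i \in [v_i]$ and an exit vertex $u_i' \in [v_i]$ adjacent to $u_i$ in $H_i$, together with transition edges $(u_i', u_{i+1}) \in E$ (indices mod $k+1$). Concatenating the Hamiltonian paths $u_i \to u_i'$ inside each class with the transition edges then gives a Hamiltonian cycle of $G$. The transition edges are available by Definition \ref{equiv_graph}: the quotient edge $([v_i], [v_{i+1}]) \in E'$ forces every $u_i' \in [v_i]$ to have at least one neighbor in $[v_{i+1}]$, so once an exit is fixed a compatible entry can be selected.

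The main obstacle is the cyclic closure: the last transition edge $(u_k', u_0)$ must land specifically on the vertex $u_0$ originally chosen as the initial entry of $[v_0]$, rather than on an arbitrary vertex of $[v_0]$. The available flexibility is that each $u_i'$ may be taken as either of the two neighbors of $u_i$ in $H_i$, and multiple candidate entries may exist for a given exit. I would carry out the construction inductively around the quotient cycle and use this two-fold choice at the final step to enforce closure. In the main application of this paper, each equivalence class is a clique (the clusters of the power graph), hence Hamiltonian-connected, so a Hamiltonian path exists between any two prescribed vertices of the class and the closure step is automatic.
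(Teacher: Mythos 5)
Your proposal follows essentially the same route as the paper's own proof: fix a Hamiltonian cycle (path) in the quotient, run through each class's Hamiltonian cycle as a Hamiltonian path, and use the quotient-edge condition of Definition \ref{equiv_graph} to pass from the exit vertex of one class to an entry vertex of the next. The closure difficulty you isolate is exactly the point the paper dispatches with ``we can relabel,'' so your treatment --- including the observation that closure is automatic in the intended application, where the classes are cliques and hence Hamiltonian-connected --- is at least as careful as the paper's argument.
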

\begin{proof}
 Let us fix a Hamiltonian cycle on the quotient graph $[v_1],[v_2],\ldots , [v_l]$. Now also fix a Hamiltonian cycle for each of the
 induced subgraphs $[v_i]$ let these be $w_{i1},w_{i2}, \ldots w_{ir_i}$, let us call this path $\pi_i$. So we have a cycle $\pi_1\pi_2, \ldots \pi_l$, 
 this is a path since from the end vertex of $\pi_i$ that is $w_{ir_i}$ there is an edge to the vertices $\{w_{(i+1)1},w_{(i+1)2},\ldots, w_{(i+1)r_{i+1}}\}$ 
 without loss of generality let us say to the vertex $w_{(i+1)1}$. So the concatenation works. From the last vertex $w_{lr_l}$ there is an edge to a vertex in 
 the set $\{w_{11},w_{12}, \ldots w_{1r_1} \}$ if it is not the vertex $w_{11} $ we can relabel so that we have a cycle. 
\end{proof}





\subsection{$p$ groups} From the previous subsection it is clear that to understand the power graph of a nilpotent group it is required to understand the primary 
components of it and the study of strong product of these graphs. One could at this point branch out to study the power graphs of general nilpotent groups but in 
this article we will only consider abelian groups as our main motive dictates that and some structural understanding of the primary abelian groups are easier. So in 
this subsection we will consider an Abelian $p$-group $G_p$ for a prime $p$, which using the structure theorem of Abelian group can be written 
as $\bigoplus_{i=1}^{n} \mathbb{Z}/p^{\a_i}\mathbb{Z}$ for some natural numbers $\a_i$. For $a_i \in \mathbb{N}_0 \mbox{ such that } a_i \leq \a_i$ we define 
$N_{(a_1,a_2, \ldots , a_n)}$ 
or simply by $N_{\overline{a}}$ to be the set $\prod N_{a_i}$ where $N_{a_i}=\{g \in \mathbb{Z}/p^{\a_i} \mathbb{Z} : o(g) = p^{\a_i -a_i} \}$. For a tuple 
$\overline{a}=(a_1,a_2,\ldots, a_n)$ we can define
$\overline{a}+1=(\mbox{min}\{a_1+1,\a_1\},\mbox{min}\{a_2+1,\a_2\}\dots,\mbox{min}\{a_{n}+1,\a_n\})$. Let us define a notation 
$I_{\a_1,\a_2, \ldots , \a_n}=\{ (\overline{a}=(a_1,a_2,\ldots , a_n) : \mbox{ where } 0\leq a_i \leq \a_i\}$, if we declare an edge between $\underline{a}$ 
and $\underline{a}+1$ then the
resulting graph structure on the set $I_{\underline{\a}}$ is a tree. For the Abelian $p$-primary group $G_p$ the tree $I_{\underline{\a}}$ is called 
$N_{\underline{\a}}$ tree of 
the group $G_p$.
\begin{rem}\label{cluster}
 \begin{enumerate} 
  \item There is a map $f: N_{\overline{a}} \rightarrow N_{\overline{a}+1}$ where $f(g_1,g_2,\ldots,g_n)=(pg_1,pg_2,\ldots, pg_n)$
  \item The group $H=(\mathbb{Z}/p^{a}\mathbb{Z})^{*}$ acts on the set $N_{\overline{a}}$ where $a=\mbox{max}\{a_1,a_2,\ldots,a_n \}$, by the action:
  $x \in H $ then $x*(g_1,g_2,\ldots, g_n)= (\overline{x}g_1, \overline{x}g_2, \ldots, \overline{x}g_n)$. Where $\overline{x}$ stands for the images of $x$ 
  under the maps $\mathbb{Z}/p^{a}\mathbb{Z} \rightarrow \mathbb{Z}/p^{a_i}$.
 \end{enumerate}

\end{rem}

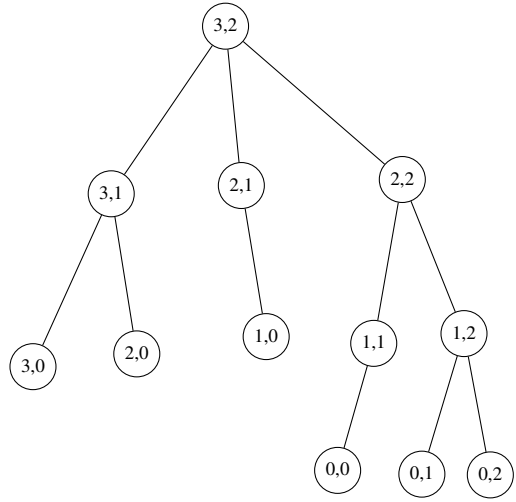
\begin{figure}
\centering
\begin{tikzpicture}[every node/.style={circle, draw, scale=.6}, scale=1.0, rotate = 180, xscale = -1]

\node (9) at ( 3.32, 2.97) {3,2};
\node (10) at ( 1.8, 5.2) {3,1};
\node (11) at ( 3.53, 5.09) {2,1};
\node (12) at ( 5.67, 5.01) {2,2};
\node (13) at ( 0.76, 7.5) {3,0};
\node (14) at ( 2.14, 7.33) {2,0};
\node (15) at ( 3.86, 7.1) {1,0};
\node (16) at ( 5.29, 7.19) {1,1};
\node (17) at ( 6.49, 7.06) {1,2};
\node (18) at ( 4.81, 8.88) {0,0};
\node (19) at ( 5.93, 8.93) {0,1};
\node (20) at ( 6.84, 8.94) {0,2};

\draw (10) -- (9);
\draw (13) -- (10);
\draw (14) -- (10);
\draw (11) -- (9);
\draw (15) -- (11);
\draw (12) -- (9);
\draw (16) -- (12);
\draw (17) -- (12);
\draw (18) -- (16);
\draw (19) -- (17);
\draw (20) -- (17);

\end{tikzpicture}
\caption{$N_{r,s}$ tree of $\mathbb{Z}_{p^3} \times \mathbb{Z}_{p^2}$}
\label{fig:3}
\end{figure}

\begin{lem}\label{cluster_def} under the action of the group $H=(\mathbb{Z}/p^a\mathbb{Z})^{*}$ on the $N_{\underline{a}}$ the orbits are precisely 
the clusters of the power graph of $G_p$.
 
\end{lem}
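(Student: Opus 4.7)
The plan is to identify clusters of $P_{G_p}$ with the sets of cyclic-subgroup generators and then match those with $H$-orbits. Recall from the earlier lemmas that two elements $v, w \in G_p$ lie in the same cluster iff each is a power of the other, which in a finite abelian group is equivalent to $\langle v \rangle = \langle w \rangle$. In an abelian $p$-group this in turn means $w = k\cdot v$ for some integer $k$ coprime to $p$, because the generators of a cyclic subgroup of order $p^M$ are exactly the scalar multiples of any one generator by units modulo $p^M$.

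First I would show that the partition $\{N_{\underline{a}}\}$ refines the cluster partition. Given $v=(g_1,\ldots,g_n)\in N_{\underline{a}}$, so that $o(g_i)=p^{\alpha_i-a_i}$, any $w=k v$ with $\gcd(k,p)=1$ satisfies $o(kg_i)=o(g_i)$ coordinate-wise, since multiplication by a $p$-unit preserves order in each cyclic factor $\mathbb{Z}/p^{\alpha_i}\mathbb{Z}$. Hence $w\in N_{\underline{a}}$, and it remains to identify clusters with $H$-orbits inside a fixed $N_{\underline{a}}$.

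Next I would unwind the two inclusions. For the orbit-to-cluster direction, fix $x\in H$ and let $\tilde{x}\in\mathbb{Z}$ be any integer lift of $x$; since $x$ is a unit, $\gcd(\tilde{x},p)=1$, and the definition of the action gives $x\cdot v=(\tilde{x}g_1,\ldots,\tilde{x}g_n)=\tilde{x}\,v$, which generates the same cyclic subgroup as $v$, hence is in the cluster of $v$. For the cluster-to-orbit direction, if $w$ is in the cluster of $v$ then $w=k v$ for some integer $k$ with $\gcd(k,p)=1$; its reduction modulo $p^a$ is then a unit $x\in H$, and the definition of the action yields $x\cdot v=w$. The only coordination one must check is that the single scalar $x\in H$ actually induces multiplication by $k$ in every coordinate simultaneously: this is automatic because each coordinate action factors through the reduction $(\mathbb{Z}/p^a\mathbb{Z})^*\to(\mathbb{Z}/p^{\alpha_i-a_i}\mathbb{Z})^*$, which is well defined as long as $\alpha_i-a_i\le a$ for all $i$ (the natural reading of the choice of $a$ in Remark \ref{cluster}).

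The main obstacle is precisely this compatibility of the common scalar across coordinates: a cluster is characterized by a single integer multiplier $k$ working uniformly, and an orbit by a single group element $x$ acting uniformly, so the content of the lemma is that these two uniform parameters encode each other via reduction and lifting of $p$-units. Once that is verified the two inclusions are immediate and the lemma is proved.
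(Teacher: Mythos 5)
Your proof is correct and follows essentially the same route as the paper: one inclusion by lifting a unit $x\in H$ to an integer multiplier, the other by observing that mutual-power (reversible-edge) relations force $w=kv$ with $k$ coprime to $p$, whose reduction lies in $H$. Your version is in fact slightly more careful than the paper's, which asserts ``$z=kg$ hence $z$ is in the orbit'' without explicitly noting that reversibility forces $\gcd(k,p)=1$, a point your cyclic-generator characterization handles explicitly.
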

\begin{proof}
 Let $g=(g_1,g_2,\ldots, g_n) \in N_{\underline{a}}$ and let $O_g$ be the orbit through $g$. Let $x*g \in O_g$ for some $x \in H$, now choosing 
 an appropriate representative $x'$ in $\mathbb{Z}$
 for the element $x$, we write $x*g = (\overline{xg_1},\overline{xg_2},\ldots, \overline{xg_n})=(x'g_1,x'g_2,\ldots,x'g_n)=xg$. Hence there is an 
 edge between $g$ and $x*g$. Now note that the
 edge is reversible since for $y \in H$ such that $xy=1$ we have $y(x*g)=yx*g=g$ thus we have an edge from $x*g $ to $g$. Now if there is $z \in G$ 
 such that there is a reversible edge between 
 $z$ and $g$, then $z= kg$ for some $k \in \mathbb{N}_0$ hence $z$ is in the orbit of $g$.
\end{proof}



Let us denote the $p$- group $\bigoplus_{i=1}^{n}\mathbb{Z}/p^{\a_i}\mathbb{Z}$ by $G_p$, observe that the abelian group gets a ring 
structure by coordinatewise multiplication, we will call this ring to be $RG_p$. Also let us denote the cluster of an element 
$\underbar{a}=(a_1,a_2,\ldots,a_n)$ by $c(\underbar{a})$, recall that we call support($\underbar{a})=\{i \leq n | a_i \neq 0 \}$. 
The following lemma is provided to make the size of the cluster $c(\underbar{a})$ clear.

\begin{lem}\label{cluster_size}
 Let $\underbar{a} \in G_p$ and let $m$ be the maximum among the $\a_i$ such that $i$ is in the support of $\underbar{a}$ then we have 
 $\card{c(\underbar{a})}=\card{(\mathbb{Z}/p^{m}\mathbb{Z})^*}$.
\end{lem}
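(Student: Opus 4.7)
My plan is to identify the cluster $c(\underline{a})$ with the set of generators of the cyclic subgroup $\langle\underline{a}\rangle\subseteq G_p$ and then count them. The key observation, extractable directly from the definition of a cluster as a strong component of the directed power graph, is that two elements $g,h$ of a finite group lie in the same cluster iff each is a power of the other, which is equivalent to $\langle g\rangle=\langle h\rangle$. Hence $c(\underline{a})$ coincides with the set of generators of $\langle\underline{a}\rangle$, and so $|c(\underline{a})|=\varphi(o(\underline{a}))$.

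Next, I would compute $o(\underline{a})$ explicitly. Since $G_p$ is a $p$-group, the order of $\underline{a}=(a_1,\ldots,a_n)$ equals the least common multiple of the orders $o(a_i)$ for $i$ in the support of $\underline{a}$, which is a power of $p$. For each such $i$, $o(a_i)=p^{\alpha_i-v_p(a_i)}\le p^{\alpha_i}$, with equality exactly when $a_i$ is a unit in $\mathbb{Z}/p^{\alpha_i}\mathbb{Z}$. Under the hypothesis of the lemma, the coordinate $i^*$ attaining the maximum $\alpha_{i^*}=m$ contributes order $p^m$ to the lcm, so $o(\underline{a})=p^m$ and therefore $|c(\underline{a})|=\varphi(p^m)=|(\mathbb{Z}/p^m\mathbb{Z})^*|$, as desired.

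An alternative is to invoke Lemma \ref{cluster_def} directly: the cluster is the orbit of $\underline{a}$ under the action of $H=(\mathbb{Z}/p^m\mathbb{Z})^*$ described in Remark \ref{cluster}. Triviality of the stabilizer then follows by examining the coordinate $i^*$ above, where the equation $x\cdot a_{i^*}=a_{i^*}$ forces $x\equiv 1\pmod{p^m}$, so distinct units produce distinct orbit elements and $|c(\underline{a})|=|H|$. I expect the main subtlety in either route to be the identification $o(\underline{a})=p^m$: it requires a coordinate $i^*$ in the support of $\underline{a}$ with $\alpha_{i^*}=m$ and $a_{i^*}$ a generator of its component, which is precisely where the particular definition of $m$ in the lemma is used; without a generator at the maximizing coordinate, one would recover a proper quotient of $(\mathbb{Z}/p^m\mathbb{Z})^*$ instead of the full group.
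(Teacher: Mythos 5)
Your skeleton is essentially the paper's: the paper's own proof shows that every $\underline{b}\in c(\underline{a})$ equals $x\underline{a}$ for a unit $x$ taken modulo $p^m$ and then argues this correspondence is injective, which is exactly your ``alternative'' orbit route, and your primary route ($c(\underline{a})$ = generators of $\langle\underline{a}\rangle$, hence $|c(\underline{a})|=\varphi(o(\underline{a}))$) is an equivalent repackaging of it. The genuine gap is the step you yourself flag as the crux and then wave through: the identification $o(\underline{a})=p^m$, equivalently the triviality of the stabilizer. Membership of $i^*$ in the support only means $a_{i^*}\neq 0$; nothing in the lemma's hypothesis makes $a_{i^*}$ a unit in $\mathbb{Z}/p^{\alpha_{i^*}}\mathbb{Z}$, so the assertion that ``the coordinate attaining the maximum contributes order $p^m$ to the lcm'' does not follow, and your closing claim that the definition of $m$ is ``precisely where'' the needed generator comes from is not a proof of anything --- the definition of $m$ does not supply it. Concretely, $\underline{a}=(p,0)\in\mathbb{Z}/p^{2}\mathbb{Z}\oplus\mathbb{Z}/p\mathbb{Z}$ has support $\{1\}$ and $m=2$, yet its cluster consists of the $p-1$ generators of $\langle(p,0)\rangle$, not of $p(p-1)$ elements; for $p=3$ this is visible in the paper's own figure for $\mathbb{Z}_{3^2}\times\mathbb{Z}_3$, where such clusters have size $2$, not $6$.

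In fairness, the paper's proof makes the same leap: its injectivity step $(x-y)\underline{a}=\underline{0}\implies x-y=0$ in $\mathbb{Z}/p^{\alpha_i}\mathbb{Z}$ for $i$ in the support is valid only when $a_i$ is a unit (in general one gets only $x\equiv y \pmod{o(a_i)}$), and likewise $xy\underline{b}=\underline{b}$ forces $xy\equiv 1$ only modulo the coordinate orders, not $xy=1$ in $RG_p$. What your first paragraph proves cleanly and correctly is the general fact $|c(\underline{a})|=\varphi(o(\underline{a}))$ with $o(\underline{a})=p^{\max_i(\alpha_i-v_p(a_i))}$; the lemma as literally stated is the special case in which some coordinate at an index realizing $m$ has full order $p^m$ (i.e.\ is a unit in its component), and it fails otherwise. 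So the repair is to change the statement --- replace $p^m$ by $o(\underline{a})$, or add the full-order hypothesis --- rather than to seek a better argument: no argument can close the gap in your second paragraph, because the literal statement is false.
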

\begin{proof}
 Recall that $\underbar{b} \in c(\underbar{a}) \iff \exists x, y  \in \mathbb{Z}$ such that $x\underbar{a}=\underbar{b}$ and $y\underbar{b}=\underbar{a}$. 
 Note that we can take the numbers $x,y$ from the ring $\mathbb{Z}/p^m\mathbb{Z}$, where $m=max\{\a_i \, | \, i \in \mathrm{support}(\underbar{a})\}$. 
 Since $x\underbar{a}=b \implies xy\underbar{b}=\underbar{b} \implies xy=1 \in RG_p$, and if 
 $x\underbar{a}=y\underbar{a}$ then $(x-y)\underbar{a}=\underbar{0} \implies (x-y)=0 \in \mathbb{Z}/p^{\a_i}\mathbb{Z}$ for all $i \in \mathrm{support}(\underbar{a})$. 
 Putting these two together we get the result.
\end{proof}

\begin{cor}
 $\card{c(\underbar{a})}=p^{\a_i -1}(p-1)$, where $\a_i$ is maximum of the set $\{\a_i | i \in \mathrm{support}(\underbar{a})\}$.
\end{cor}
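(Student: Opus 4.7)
The plan is essentially a one-line invocation of the preceding Lemma \ref{cluster_size} followed by a standard totient computation. First I would apply Lemma \ref{cluster_size} to the element $\underbar{a}$ with $m = \max\{\a_i \mid i \in \mathrm{support}(\underbar{a})\}$, which gives $\card{c(\underbar{a})} = \card{(\mathbb{Z}/p^{m}\mathbb{Z})^{*}}$. Then it remains only to evaluate the order of the unit group of $\mathbb{Z}/p^m\mathbb{Z}$.

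For that computation, I would count units directly rather than quote Euler's formula as a black box: an element $x \in \mathbb{Z}/p^m\mathbb{Z}$ is a unit exactly when $\gcd(x,p)=1$, i.e., when $x$ is not divisible by $p$. The non-units are the $p^{m-1}$ multiples of $p$ in $\mathbb{Z}/p^m\mathbb{Z}$, so the number of units is $p^m - p^{m-1} = p^{m-1}(p-1)$. Substituting this back into the conclusion of Lemma \ref{cluster_size} yields $\card{c(\underbar{a})} = p^{m-1}(p-1)$, which, after identifying $m$ with the $\a_i$ described in the statement, is exactly the claimed formula.

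There is no real obstacle here; the corollary is a direct numerical unpacking of Lemma \ref{cluster_size} using the prime-power case of Euler's totient. The only mild care needed is to be explicit that the $\a_i$ appearing in the corollary's statement is to be read as the maximum exponent over the support (matching the $m$ of the lemma), so that the notation $p^{\a_i - 1}(p-1)$ refers unambiguously to this maximal component.
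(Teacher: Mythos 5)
Your proposal is correct and is exactly the argument the paper intends: the corollary is stated without proof as an immediate consequence of Lemma \ref{cluster_size} together with the standard count $\card{(\mathbb{Z}/p^{m}\mathbb{Z})^{*}}=p^{m}-p^{m-1}=p^{m-1}(p-1)$. Your explicit counting of the non-units and your remark that the $\a_i$ in the statement should be read as the maximal exponent over the support match the paper's reading precisely.
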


Let us prove a small lemma concerning the group $H=\mathbb{Z}/p^m\mathbb{Z}$, for an element $x \in H$ let us define $r(x)=\card{ \{y \in H \, | \, py=x\} }$.
\begin{lem}\label{r(a)}
 With the notation as above $r(x)=0$ if $x \in (\mathbb{Z}/p^m \mathbb{Z})^*$ else $r(x)=p$. Further if $y \in \Zp{n}^*$ for some $n \geq m$ then $r(yx)=r(x)$.
\end{lem}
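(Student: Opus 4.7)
The plan is to analyze the multiplication-by-$p$ map $\varphi: H \to H$, $\varphi(y) = py$, which is exactly the map whose fiber cardinalities are the quantities $r(x)$.

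First, I would note that $\varphi$ is a group homomorphism with kernel $\ker(\varphi) = p^{m-1}H \cong \mathbb{Z}/p\mathbb{Z}$ of order $p$, and image $\mathrm{Im}(\varphi) = pH$. By the first isomorphism theorem, every element of $pH$ has exactly $p$ preimages, while elements outside $pH$ have none. So the entire lemma reduces to identifying $pH$ with the non-units of $H$.

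Next I would verify this identification: an element $x \in H = \Zp{m}$ is a unit iff $\gcd(x,p) = 1$ iff $p \nmid x$ iff $x \notin pH$. Combining this with the previous step gives $r(x) = 0$ for $x \in H^*$ and $r(x) = p$ otherwise, establishing the first assertion.

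For the second assertion, take $y \in \Zp{n}^*$ with $n \geq m$. Since $y$ is coprime to $p$, its image $\overline{y}$ under the natural surjection $\Zp{n} \sur \Zp{m}$ is still coprime to $p$, hence lies in $H^*$. Multiplication by $\overline{y}$ is therefore a bijection on $H$ that sends $H^*$ to $H^*$ and $pH$ to $pH$ (since both are $H^*$-invariant: $pH$ is an ideal). Thus $yx$ is a unit iff $x$ is, and so $r(yx) = r(x)$ by the first part. The argument is essentially routine; no real obstacle is anticipated beyond keeping the identification of $pH$ with non-units straight.
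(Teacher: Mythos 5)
Your proof is correct. Note that the paper states this lemma without any proof, so there is nothing to compare against; your argument via the multiplication-by-$p$ homomorphism $\varphi(y)=py$ (kernel $p^{m-1}H$ of order $p$, image $pH$ equal to the set of non-units, fibers of constant size $p$ over the image) together with the observation that reduction of $y$ mod $p^m$ is a unit and unit multiplication preserves $pH$, is exactly the routine justification the author left implicit.
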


In the next lemma we calculate the degree of the clusters in the graph of $G_p$, $C(G_p)$. 
Note that in the cluster graph we only take the so called irreducible edges in consideration and we leave the ``implied'' edges.
\begin{lem}\label{cluster_deg}
 Let $c(\underbar{a})$ where $\ua=(a_1,a_2,a_3,\ldots,a_n)$ be a cluster in the cluster graph $C(G_p)$ then the indegree  of the 
 cluster $\mathrm{indeg}(c(\underbar{a}))=\prod_{i=1}^{n}r(a_i)$.
\end{lem}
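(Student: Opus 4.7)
The plan is to identify $\mathrm{indeg}(c(\underline{a}))$ with the number of elements $\underline{y} \in G_p$ satisfying $p\underline{y} = \underline{a}$, and then count these preimages coordinate-by-coordinate using the direct-sum structure of $G_p$.

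\textbf{Step 1 (interpret indegree as a preimage count).} Because $p$ is prime and $G_p$ is a $p$-group, the only irreducible incoming cluster edges to $c(\underline{a})$ come from the elementary power step, which in additive notation is multiplication by $p$. Any in-edge arising from a power $k\underline{y} = \underline{a}$ with $p^{2} \mid k$ factors as $\underline{y} \to p\underline{y} \to p^{2}\underline{y} \to \cdots \to k\underline{y}$ through intermediate clusters, so it is a transitive (hence non-irreducible) edge; and if $\gcd(k,p)=1$, then $k$ is a unit modulo the order, keeping $\underline{y}$ inside $c(\underline{a})$. Hence the irreducible in-edges to $c(\underline{a})$, counted with the multiplicity coming from their source elements, are in bijection with the set $\{\underline{y} \in G_p : p\underline{y} = \underline{a}\}$, and one notes that this count is constant on $c(\underline{a})$ because $r(ua_i) = r(a_i)$ for $u$ a unit (Lemma \ref{r(a)}), making the quantity well defined at the level of the cluster.

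\textbf{Step 2 (decompose coordinate-wise and apply Lemma \ref{r(a)}).} Using the direct-sum decomposition $G_p = \bigoplus_{i=1}^{n}\mathbb{Z}/p^{\alpha_i}\mathbb{Z}$, the single vector equation $p\underline{y}=\underline{a}$ is equivalent to the $n$ independent scalar equations $py_i = a_i$ in $\mathbb{Z}/p^{\alpha_i}\mathbb{Z}$. Lemma \ref{r(a)} tells us that the $i$-th equation has exactly $r(a_i)$ solutions: zero when $a_i \in (\mathbb{Z}/p^{\alpha_i}\mathbb{Z})^*$ and $p$ otherwise. Since the coordinates are independent, the total number of solutions multiplies to $\prod_{i=1}^{n} r(a_i)$, which by Step 1 equals $\mathrm{indeg}(c(\underline{a}))$.

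The main obstacle is Step 1: carefully reconciling the graph-theoretic ``indegree'' in the irreducible-edge cluster graph with the preimage count $|\{\underline{y} : p\underline{y} = \underline{a}\}|$. One must verify both that every preimage $\underline{y}$ genuinely produces an irreducible edge into $c(\underline{a})$ (so none are lost) and that no other irreducible edges are possible (so none are overcounted); both follow from $p$ being prime, which prevents any nontrivial refinement of the single-step operation $\underline{y} \mapsto p\underline{y}$. Once this identification is established, Step 2 is a routine coordinatewise count that invokes Lemma \ref{r(a)} directly.
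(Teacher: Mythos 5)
Your proof follows essentially the same route as the paper: the paper likewise identifies $\mathrm{indeg}(c(\underline{a}))$ with the count of solutions of $p\underline{b}=x\underline{a}$ for a fixed unit $x$, splits this coordinatewise into the equations $pb_i=xa_i$, and applies Lemma \ref{r(a)} together with $r(xa_i)=r(a_i)$ to obtain $\prod_{i=1}^{n}r(a_i)$. Your only deviation is cosmetic: you take $x=1$ and invoke the unit-invariance of $r$ to see the count is well defined on the cluster, which matches the paper's argument in substance.
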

\begin{proof}
 The required indegree is given by the number $\card{ \{c(\ub) : p\ub=x \ua \mbox{ such that } x \in (\Zp{\a_i})^* \} }$, 
 where $\a_i$ is the maximum of the numbers $\a_1,\a_2,\a_3, \ldots,\a_n$. Or the number of $\ub$ such that $p(b_1,b_2,b_3,\ldots,b_n)=x(a_1,a_2,a_3,\ldots,a_n)$ or 
 equivalently $pb_i=xa_i$. So the required number is $\prod_{i=1}^{n} r(xa_i)$ where $x \in (\Zp{\a_i})^*$ fixed. Now by the lemma \ref{r(a)} we are done.
\end{proof}

\section{Investigation of Hamiltonian Cycle}

\subsection{Weighted Hamiltonian Cycle}

Given an weighted graph $G=(V,E,w)$ where $w: V \rightarrow \mathbb{N}$ is the weight function we define the graph $\widetilde{G}^w=(\widetilde{V},\widetilde{E})$, 
where $\widetilde(V)=\cup_{v \in V} \{v\}\times \{ 1, \ldots, w(v)\}$ and there is an edge between $(v,i)$ and $(v', i')$ if either $v=v'$ or $(v,v') \in E$. A weighted 
graph as defined above is called a $w$-Hamiltonian graph if there is a path $\pi=v_1v_2v_3\ldots v_n$ such that 
$(v_i,v_{i+1}) \in E$ and $\{v_i | 1 \leq i \leq n\}=V \mbox{ and } \card{\{i| v_i =v \}}\leq w(v)$. Further for such a path $\pi=v_1v_2v_3\ldots v_n$ if $v_1=v_n$ then the path
is called a weighted Hamiltonian cycle or $w-$Hamiltonian cycle. A graph $G$ with weight function $w:V \rightarrow \mathbb{R}$ is called $w-$Hamiltonian if there is a $w-$Hamiltonian
cycle in the graph $G$ with weight $w$.
When the weight is clear from the context we will denote $\widetilde{G}^w$ by $\widetilde{G}$.

In the lemma below we note a very useful fact, which we will henceforth refer to as the generalized cut lemma.
\begin{lem}\label{cut lemma} Let $G=(V,E,w)$ be a weighted graph, let $C \subset V$, if $G$ is $w$-Hamiltonian then $w(C)=\sum_{v \in C} w(v) \geq N$ where $N$ is the 
number of connected components in $G \setminus C$. Further if there exist a $w$-Hamiltonian path then $w(C) \geq N -1$.
 
\end{lem}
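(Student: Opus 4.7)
The plan is to decompose the cycle (or path) according to its interactions with the cut $C$. The key observation is that along a $w$-Hamiltonian cycle $\pi = v_1 v_2 \ldots v_n v_1$, whenever two consecutive vertices $v_i, v_{i+1}$ both lie in $V \setminus C$, the edge $(v_i,v_{i+1})$ has both endpoints outside $C$ and hence survives as an edge of $G \setminus C$. Consequently each maximal run of consecutive positions whose vertices lie in $V \setminus C$ traces a walk inside a single connected component of $G \setminus C$.

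For the cycle statement I would fix a $w$-Hamiltonian cycle and let $R$ be the number of maximal $(V\setminus C)$-runs and $R'$ the number of maximal $C$-runs encountered along the cycle. Since the sequence is cyclic and bichromatic (in the nontrivial case when both $C$ and $V \setminus C$ are nonempty), these two types of runs alternate around the cycle and so $R = R'$. Every component of $G \setminus C$ contains at least one vertex of $V \setminus C$, which $\pi$ must visit, so that component contributes at least one $(V\setminus C)$-run; hence $N \leq R$. On the other hand each $C$-run contains at least one occurrence of some vertex of $C$, and the total number of occurrences of vertices of $C$ in $\pi$ is bounded by $\sum_{v \in C} w(v) = w(C)$, giving $R' \leq w(C)$. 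Chaining these yields $N \leq R = R' \leq w(C)$.

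For a $w$-Hamiltonian path the argument is essentially identical, except that the sequence $v_1, \ldots, v_n$ is linear rather than cyclic, so the count of $(V\setminus C)$-runs can exceed the count of $C$-runs by at most $1$ (this happens precisely when both endpoints of the path lie in $V \setminus C$). The identity $R = R'$ is thus replaced by $R \leq R' + 1$, yielding $N \leq w(C) + 1$, equivalently $w(C) \geq N - 1$. I do not foresee a serious obstacle: the only point that needs care is the initial observation that an edge of $\pi$ joining two consecutive vertices of $V \setminus C$ is an edge of $G \setminus C$, which is what keeps each $(V\setminus C)$-run trapped inside a single component and drives the whole count.
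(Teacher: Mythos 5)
Your proof is correct and follows essentially the same idea as the paper, which disposes of the lemma in one line by noting that a spanning closed (resp.\ open) walk must visit $C$, counted with multiplicity, at least $N$ (resp.\ $N-1$) times; your alternating-runs bookkeeping is simply a careful formalization of that counting argument.
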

\begin{proof} Clearly since there are $N$ connected components in the graph $G\setminus C$ one must visit
$C$ ( counting multiplicity ) at least $N-1$ times to get a $w$-Hamiltonian graph. 
And that is precisely the condition in the statement. 
 
\end{proof}

\begin{lem}
 In a graph $G=(V,E)$ if there is a special vertex $v_0$ such that $(v_0,v) \in E$ for all $v \in V \ \{v_0\}$ then there exist a Hamiltonian 
 cycle in the graph $G$ if and only if there is a Hamiltonian path in the graph $G\setminus v_0$.
\end{lem}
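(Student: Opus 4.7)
The plan is to prove both directions of the equivalence directly, exploiting the universal adjacency of $v_0$.

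For the forward direction, I would assume $G$ has a Hamiltonian cycle $v_0, u_1, u_2, \ldots, u_{n-1}, v_0$ (cyclically listing $v_0$ as the starting vertex, which we can do without loss of generality). Deleting $v_0$ and the two edges of the cycle incident to it leaves the path $u_1, u_2, \ldots, u_{n-1}$, which visits every vertex of $G \setminus v_0$ exactly once and uses only edges of $G$ that do not touch $v_0$; hence it is a Hamiltonian path in $G \setminus v_0$.

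For the backward direction, I would assume $G \setminus v_0$ admits a Hamiltonian path $u_1, u_2, \ldots, u_{n-1}$. Since by hypothesis $(v_0, u_1)$ and $(v_0, u_{n-1})$ are edges of $G$, appending $v_0$ to both ends produces the cycle $v_0, u_1, u_2, \ldots, u_{n-1}, v_0$, which visits every vertex of $G$ exactly once and uses only edges of $G$. This is a Hamiltonian cycle.

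The argument is entirely routine and I do not expect any obstacle: the key observation is simply that the universal neighbor $v_0$ can always serve as a ``hinge'' closing a Hamiltonian path of $G \setminus v_0$ into a Hamiltonian cycle of $G$, and conversely any Hamiltonian cycle through $v_0$ yields such a path by excision. Since both implications are a couple of lines, the proof will essentially consist of the two observations above, with the only care needed being to spell out that $v_0$ may be taken as the distinguished basepoint of the cycle in the forward direction.
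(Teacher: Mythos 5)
Your proof is correct and follows essentially the same route as the paper: the universal vertex $v_0$ is appended to both ends of a Hamiltonian path of $G\setminus v_0$ to close it into a cycle. In fact your write-up is slightly more complete, since the paper only spells out this backward direction and leaves the (equally routine) forward excision argument implicit.
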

\begin{proof}
It is clear that if $\pi=v_1,v_2,....,v_n$ be a Hamiltonian path in $G \backslash v_0$, then $\pi_1=v_0,v_1,v_2,...,v_n,v_0$
gives a Hamiltonian cycle in $G$.
\end{proof}

In the lemma below we give a necessary and sufficient condition on a weight function $w$ for which any tree $G$ is a $w$-Hamiltonian graph.

\begin{lem}
 A weighted tree $T=(V,E,w)$ is $w$-Hamiltonian if and only if $w(v) \geq deg(v), \forall \, v \in V$.
\end{lem}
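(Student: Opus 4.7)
The plan is to prove both directions by leveraging tree structure together with two ingredients already available: the generalized cut lemma (Lemma \ref{cut lemma}) for the necessity, and the existence of an Eulerian circuit in the edge-doubled tree for the sufficiency.

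For the necessity direction, I would fix an arbitrary $v \in V$ and apply Lemma \ref{cut lemma} with the singleton cut set $C = \{v\}$. Because $T$ is a tree, every edge incident to $v$ leads into a distinct maximal subtree, and conversely every vertex of $T \setminus \{v\}$ lies in exactly one of these subtrees; hence $T \setminus \{v\}$ has exactly $\deg(v)$ connected components. The cut lemma then gives $w(v) = w(C) \geq \deg(v)$ directly.

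For the sufficiency direction, assume $w(v) \geq \deg(v)$ for all $v \in V$, and consider the multigraph $T^{(2)}$ obtained by duplicating every edge of $T$. Two features fall into place at once: $T^{(2)}$ remains connected since $T$ is, and every vertex of $T^{(2)}$ has even degree $2\deg(v)$. Consequently $T^{(2)}$ admits an Eulerian circuit, whose vertex sequence I read off as a closed walk $W$ in $T$ traversing each edge of $T$ exactly twice. Each appearance of a vertex $v$ in $W$ consumes two of the $2\deg(v)$ half-edges at $v$, so $v$ appears exactly $\deg(v)$ times in $W$; since $\deg(v) \leq w(v)$ and every vertex of $T$ has at least one incident edge (making it visited by $W$), this closed walk is a $w$-Hamiltonian spanning walk of $T$.

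The main delicate point is the interpretation of ``$w$-Hamiltonian''. The definition in the paper is phrased in terms of a spanning sequence $v_1 v_2 \ldots v_n$ with multiplicity cap $w(v)$, and Lemma \ref{cut lemma} distinguishes the closed-walk bound $w(C) \geq N$ from the weaker open-walk bound $w(C) \geq N-1$. For the stated equivalence $w(v) \geq \deg(v)$ to match tightly, one must read the lemma as concerning a closed walk (a $w$-Hamiltonian cycle); under that reading the Euler-circuit construction above supplies exactly such a walk, and the cut-lemma application on the necessity side yields the exact inequality $w(v) \geq \deg(v)$ rather than an off-by-one version. I would flag this interpretive point at the start of the proof and then carry it through uniformly.
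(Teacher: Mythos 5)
Your proof is correct, and while the necessity direction coincides with the paper's (apply the generalized cut Lemma \ref{cut lemma} to the singleton cut $C=\{v\}$, noting that $T\setminus\{v\}$ has exactly $\deg(v)$ components), your sufficiency argument takes a genuinely different route. The paper argues recursively: it removes a vertex $v$, obtains the subtrees $T_1,\dots,T_k$, assumes (inductively) $w$-Hamiltonian cycles $\alpha_i$ in each $T_i$, and splices them as $v\alpha_1 v\alpha_2\cdots v\alpha_k$, using $w(v)\geq\deg(v)=k$; this is a top-down decomposition whose induction and degree bookkeeping are left somewhat informal. You instead double every edge of $T$ and take an Eulerian circuit of the resulting even, connected multigraph; reading it off as a closed walk in $T$, each vertex appears exactly $\deg(v)\leq w(v)$ times. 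This is non-recursive, gives the extremal walk explicitly (showing the bound $\deg(v)$ is attained exactly, so the characterization is tight), and avoids the paper's loosely stated induction; the paper's splicing argument, on the other hand, is closer in spirit to the constructions used elsewhere in the paper (Theorem \ref{equiv_hamiltonian}). Your interpretive remark is well taken and matches the paper's implicit convention: the definition of $w$-Hamiltonian is phrased as a path, but the paper's own necessity step invokes the closed-walk bound $w(C)\geq N$ rather than $w(C)\geq N-1$, so the closed-walk reading is the one under which the stated equivalence is exact. One small point to make explicit: if you write the Euler circuit as a linear sequence with the starting vertex repeated at the end, that vertex occurs $\deg(v)+1$ times, so either count occurrences cyclically or delete the final repeated vertex to obtain an open spanning walk in which every vertex occurs at most $\deg(v)$ times; also the one-vertex tree should be dispatched separately since it has no edges (it is trivially covered because $w(v)\geq 1$).
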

\begin{proof} Let the tree $T=(V,E,w)$ be $w$-Hamiltonian and if possible let there exists $v\in V$ such that $deg(v) > w(v)$,
then $\{v\}$ gives a cut, therefore by cut lemma we have $w(v) \geq$ the number of connected components of $G\backslash v$. So we arrive
at a contradiction. Hence $w(v) \geq deg(v), \forall v\in V$. Conversely, let $w(v) \geq deg(v), \forall v \in V$. Now choose 
any $v \in V$ and $T_1,T_2,...,T_k$ be the components of $T\backslash v$. Then $T_i$ also satisfies $w(v_i) \geq deg(v_i),
\forall v_i \in T_i$. If there are Hamiltonian cycles $\alpha_1,\alpha_2,...,\alpha_k$ in $T_1,T_2,....,T_k$ then $v\alpha_1,
v\alpha_2,...,v\alpha_k$ is a Hamiltonian path in $T$ with $w(v) \geq k$ and $w(v_i) \geq deg^{T}(v_i)=deg^{T_i}(v_i)-1$. Hence
it follows.

\end{proof}
  
\begin{lem} Let $(T,w)$ be a weighted tree with weight $w$ (here $T$ may or may not be Hamiltonian). Then $(T,\lambda w)$ is 
Hamiltonian if and only if $\lambda \geq max_{v \in V(T)} \frac{deg(v)}{w(v)}$.
\end{lem}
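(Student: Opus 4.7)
The plan is to deduce this statement directly from the preceding lemma, which completely characterizes $w$-Hamiltonicity of a weighted tree by the condition $w(v)\geq \deg(v)$ for every vertex $v$. Since multiplying the weight function by a scalar $\lambda$ gives the new weight function $v\mapsto \lambda w(v)$, the previous lemma applied to the weighted tree $(T,\lambda w)$ says that $(T,\lambda w)$ is Hamiltonian if and only if $\lambda w(v)\geq \deg(v)$ for every $v\in V(T)$.

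For the forward implication, I would assume $(T,\lambda w)$ is Hamiltonian, apply the preceding lemma to obtain $\lambda w(v)\geq \deg(v)$ for all $v\in V(T)$, divide by $w(v)>0$ to get $\lambda \geq \deg(v)/w(v)$ for each $v$, and then take the maximum over $v$ to conclude $\lambda \geq \max_{v\in V(T)}\deg(v)/w(v)$. For the reverse implication, I would start from $\lambda \geq \max_{v\in V(T)}\deg(v)/w(v)$, observe that this yields $\lambda w(v)\geq \deg(v)$ for every vertex $v$, and invoke the preceding lemma in the opposite direction to conclude that $(T,\lambda w)$ is Hamiltonian.

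The only subtle point is a matter of interpretation: the weight functions used in the definition of $\widetilde{G}^w$ take values in $\mathbb{N}$, so strictly speaking one should ensure that $\lambda w$ is still integer valued when invoking the preceding lemma. I would either restrict $\lambda$ to positive integers (the natural reading in context) or replace $\lambda w$ by $\lceil \lambda w\rceil$ inside the proof, noting that the characterization $\lceil\lambda w(v)\rceil \geq \deg(v)$ for all $v$ is equivalent to $\lambda\geq \max_v \deg(v)/w(v)$ because degrees are integers. Apart from this housekeeping remark, the proof is an immediate corollary and presents no genuine obstacle.
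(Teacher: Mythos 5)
Your argument is correct and is essentially identical to the paper's proof: apply the preceding characterization ($w$-Hamiltonian iff $w(v)\geq \deg(v)$ for all $v$) to the weight $\lambda w$, divide by $w(v)$, and take the maximum over vertices. Your additional remark about keeping $\lambda w$ integer-valued is a reasonable bit of housekeeping that the paper silently omits, but it does not change the substance of the argument.
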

\begin{proof}$(T,\lambda w)$ is Hamiltonian $\Leftrightarrow \lambda w(v) \geq deg(v), \forall v \in V(T) \Leftrightarrow
\lambda \geq \frac{deg(v)}{w(v)}, \forall v \in V(T) \Leftrightarrow \lambda \geq max_{v \in V(T)} \frac{deg(v)}{w(v)}$.
 
\end{proof}

\begin{thm}
 A weighted graph $G$ with weight $w$ is $w$-Hamiltonian if and only if $\widetilde{G}$ is Hamiltonian.
\end{thm}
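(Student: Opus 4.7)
The plan is to construct explicit two-way translations between $w$-Hamiltonian paths of $G$ and Hamiltonian paths of $\widetilde G$. In one direction I lift and splice; in the other I project and contract. The clique structure on each fibre $\{v\}\times\{1,\ldots,w(v)\}$ together with the full blow-up of edges between adjacent fibres is what makes both moves work.

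For $(\Rightarrow)$, given a $w$-Hamiltonian path $v_1 v_2\cdots v_n$ in $G$, set $k_v=\card{\{i:v_i=v\}}\le w(v)$. I would label the $j$-th occurrence of $v$ by $(v,j)$ to obtain a sequence $\tilde v_1,\ldots,\tilde v_n$ in $\widetilde V$; consecutive terms are adjacent in $\widetilde G$ because consecutive terms of the original path are $G$-adjacent. This sequence hits every $(v,j)$ with $j\le k_v$ exactly once but omits $(v,k_v+1),\ldots,(v,w(v))$. I would repair this by inserting the missing block immediately after the first occurrence of $(v,1)$, for each $v$ with $k_v<w(v)$; adjacency inside the block is automatic (same fibre), and the final inserted copy $(v,w(v))$ is joined to the next scheduled vertex $(v',j')$ by the blow-up rule that $(v,v')\in E$ produces all cross edges. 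These insertions, performed at disjoint positions in the sequence, produce a Hamiltonian path in $\widetilde G$.

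For $(\Leftarrow)$, starting with a Hamiltonian path $u_1,\ldots,u_N$ in $\widetilde G$ (necessarily $N=\sum_v w(v)$), I would apply the projection $p(v,j)=v$. By definition of $\widetilde E$, each consecutive pair $p(u_i),p(u_{i+1})$ is either equal or a $G$-edge; collapsing maximal constant runs yields a walk $\hat v_1,\ldots,\hat v_m$ in $G$ that meets every vertex and visits each $v$ at most $w(v)$ times, since it met $v$ exactly $w(v)$ times before collapsing. The only delicate point I anticipate is the splicing in the forward direction: one must check that insertions at different first-occurrence positions do not collide, which holds because they happen at disjoint indices, and that adjacency is preserved at both splice endpoints, which is exactly what the clique-plus-cross-edge structure of $\widetilde G$ guarantees.
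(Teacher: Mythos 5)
Your proof is correct, and in outline it is the same two-way translation the paper intends: lift a $w$-Hamiltonian path of $G$ to a Hamiltonian path of $\widetilde G$, and project/contract in the other direction. But your forward direction is genuinely different from, and more careful than, the paper's. The paper takes a weighted Hamiltonian cycle $v_0,\ldots,v_n$ and sweeps it once per ``level,'' producing $(v_0,w_0),(v_1,w_0),\ldots,(v_n,w_0),(v_0,w_1),\ldots$; as written this only makes sense when all weights are equal and each vertex is visited exactly once, since the second coordinate of a copy of $v$ must range over $\{1,\ldots,w(v)\}$, which depends on $v$, and repeated visits in the weighted cycle would make the sweep hit some copies more than once. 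Your construction --- label the $j$-th visit of $v$ by $(v,j)$, then splice the unused copies $(v,k_v+1),\ldots,(v,w(v))$ into the sequence immediately after $(v,1)$, using the fibre clique for the interior adjacencies and the blown-up cross edges for the exit adjacency to the next scheduled vertex --- works for arbitrary weights and multiple visits, and your observation that insertions at distinct first-occurrence positions cannot interfere is exactly the right check. Your reverse direction (project, collapse maximal constant runs, and note that $v$ occurred exactly $w(v)$ times before collapsing) coincides with the paper's. One point worth flagging: you prove the path version, which matches the paper's formal definition of $w$-Hamiltonian, whereas the paper's proof speaks of cycles; if ``$\widetilde G$ is Hamiltonian'' is meant in the cycle sense, the same splicing argument applied to a closed $w$-Hamiltonian walk closes up without change, so only that one remark would need to be added.
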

\begin{proof} Let the weighted graph $(G,w)$ is $w$-Hamiltonian. Then let $v_0,v_1,v_2,...,v_n$ be a weighted Hamiltonian 
cycle in $G$ such that $\cup \{v_i\}=V$. Let $w_i$'s be the corresponding weights of $v_i$'s. Now consider the cycle 
$(v_0,w_0),(v_1,w_0),...,(v_n,w_0),\\(v_0,w_1),(v_1,w_1),...,(v_n,w_1),(v_0,w_n),...,(v_1,w_n),...,(v_n,w_n),(v_0,w_0)$, which 
gives a Hamiltonian cycle in $\widetilde{G}$. For the other direction, let we have Hamiltonian cycle in  $\widetilde{G}$,
say $(v_0,i_0),(v_1,i_1),....,(v_r,i_r),(v_0,i_0)$. Then take the cycle according to the vertices $v_i$'s we get the weighted 
Hamiltonian cycle.

\end{proof}


\subsection{The grid }
Let for a prime $p$, $G_p=\bigoplus_{i=1}^{l}\mathbb{Z}_{p^{n_i}}$ for some numbers $n_i \in \mathbb{N}$, we define the level sets $L_k=\{[x] \in G_p | ht(x)= k\}$, 
where $[x]$ denotes the cluster of the 
element $x$ and $ht(x)$ is the number $\log_{p}(o(x))$ or equivalently the 
number of irreducible edges it takes to reach the cluster of $0$ from the cluster of $x$. Now observe that the induced subgraph on $L_0 \cup L_1$ is a star. 
We will call this the associated
star to the group $G_p$. Note that the cluster graph of the group $G_p$ gets a graded structure on the set of vertices through these level sets. Or in other words we have 
$V = \cup_{i=1}^{l} L_i$.

 \subsection{Cluster grid of a graph} 
 Recall a subset $S$ of the set of vertices of a graph $G$ is called a cut if the number of connected components in $G \setminus S$ is larger than the 
 number of elements in the set $S$. In this section we will investigate several cuts in the power graph of a group $G$. Consider the class of groups
  $G=\mathbb{Z}_p^{m} \times \mathbb{Z}_q^{n}$ let $G(m,n,w)$ be the weighted cluster grid of the group. 
 \begin{lem}\label{simple_cuts}
  In the graph $G(m,n,w)$ as in above we have:
  \begin{enumerate}
   \item $S=\{ (x,y) | x=0, y \leq n\}$ is a cut if $n >m(p-1)+1$.
   \item $S=\{(x,y)| x \leq m , y=0\}$ is a cut if $m > n(q-1)+1$.
   \item $S=\{(x,y)| x=0 \, \mathrm{ or } \, y=0 \}$ is a cut if $mn > m(p-1)+n(q-1)+1$.
  \end{enumerate}

 \end{lem}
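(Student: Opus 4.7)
The plan is to apply the generalized cut lemma (Lemma~\ref{cut lemma}) to each of the three candidate subsets $S$, showing that $w(S) < N$, where $N$ is the number of connected components of $G(m,n,w) \setminus S$. For each item, the proof has three steps: (i) sum the cluster weights of the vertices in $S$ using the cluster-size formulas (Lemma~\ref{cluster_size} and its corollary) adapted to the two-prime product $\mathbb{Z}_p^m \times \mathbb{Z}_q^n$; (ii) count the connected components of the complement $G(m,n,w) \setminus S$ from the structure of the cluster grid; and (iii) verify that the resulting inequality matches the stated numerical condition.

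Item (3) is the most transparent. Once the boundary L-shape $\{x=0\} \cup \{y=0\}$ is removed, every interior cluster $(x,y)$ with $x, y \geq 1$ loses all its irreducible neighbours, since those neighbours lie on the axes we just deleted; hence the $mn$ interior cells become pairwise disconnected, giving $N = mn$. Summing the boundary axis weights and correcting for the double-count at the origin produces $w(S) = m(p-1) + n(q-1) + 1$, so the cut inequality $w(S) < N$ is exactly the stated $mn > m(p-1) + n(q-1) + 1$.

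Items (1) and (2) are symmetric under swapping $(p, m, x) \leftrightarrow (q, n, y)$, so one handles (1) and appeals to symmetry for (2). For (1), removing the column $x=0$ leaves the interior cells attached only to the surviving axis, and they decompose into $n$ pieces indexed by the deleted coordinate, giving $N = n$; the sum of cluster weights along the column totals $m(p-1) + 1$, yielding the cut inequality $n > m(p-1) + 1$.

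The main obstacle is the component count in items (1) and (2): one has to confirm that once a single axis is removed, no path between interior cells in different rows (respectively columns) survives without passing through the deleted axis. This hinges on a careful description of the irreducible edges of the cluster grid for $\mathbb{Z}_p^m \times \mathbb{Z}_q^n$; once that structural fact is secured, the weight sums and the final algebra follow mechanically.
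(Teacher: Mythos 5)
Your overall strategy is the intended one (the paper in fact states Lemma \ref{simple_cuts} without any proof): apply the generalized cut lemma \ref{cut lemma} to each slice $S$, sum the cluster weights, and count the components of the complement. Your treatment of item (3) is correct: after deleting $\{x=0\}\cup\{y=0\}$ every interior cluster $(x,y)$ loses all of its neighbours $(x,0),(0,y),(0,0)$, so $N=mn$, while $w(S)=m(p-1)+n(q-1)+1$; this also fixes the convention that the $m$ nonzero clusters on the $x$-axis have weight $p-1$ and the $n$ nonzero clusters on the $y$-axis have weight $q-1$.

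Your computation for item (1), however, does not cohere with the set you remove. The column $S=\{(0,y):0\le y\le n\}$ consists of the identity cluster together with the $n$ nonidentity clusters of the $q$-part, so $w(S)=n(q-1)+1$, not $m(p-1)+1$; and after its removal the surviving graph splits into one star $\{(x,0)\}\cup\{(x,y):y\neq 0\}$ for each nonzero $x$, so $N=m$, not $n$ (your phrase ``$n$ pieces indexed by the deleted coordinate'' is internally inconsistent: the pieces are indexed by the nonzero values of $x$, of which there are $m$). Hence the cut condition for this column is $m>n(q-1)+1$, and symmetrically $n>m(p-1)+1$ is the cut condition for the row $y=0$, whose weight is $m(p-1)+1$ and whose removal leaves $n$ components. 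In other words, relative to the convention forced by item (3), the hypotheses of items (1) and (2) in the lemma as printed are interchanged; your proof reproduces that interchange (attaching the row's weight and component count to the column) instead of resolving it, so the derivations of (1) and (2) as written are incorrect even though the final inequalities echo the statement. The repair is routine — swap either the two sets or the two conditions and record the correct values $w(S)$ and $N$ for each slice — and Theorem \ref{main_necessary} is unaffected, since it only invokes the disjunction of the three inequalities.
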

\begin{thm}\label{main_necessary}
 The power graph of a the group $G=(\Z_p)^m\times(\Z_p)^n$ is not Hamiltonian if $n > m(q-1) +1$ or $m > n(q-1)+1$ or $mn > m(q-1)+n(q-1)$.
\end{thm}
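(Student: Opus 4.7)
The theorem is a direct packaging of the generalized cut lemma (Lemma~\ref{cut lemma}) with the three specific cuts of Lemma~\ref{simple_cuts}, and the plan has three short steps.

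\emph{Step 1: Reduce to the cluster grid.} By the description of the power graph as $\widetilde{C(P_G)^w}$ (with the weight on a cluster given by its cardinality, Lemma~\ref{cluster_size}) together with the theorem that $(G,w)$ is $w$-Hamiltonian if and only if $\widetilde{G}^w$ is Hamiltonian, the power graph $P_G$ is Hamiltonian exactly when the weighted cluster grid $G(m,n,w)$ is $w$-Hamiltonian. Hence it suffices to show that $G(m,n,w)$ fails to be $w$-Hamiltonian under any one of the three hypotheses.

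\emph{Step 2: Contrapositive of the cut lemma.} By Lemma~\ref{cut lemma}, every $w$-Hamiltonian graph satisfies $w(S) \geq N$ for all $S \subseteq V$, where $N$ is the number of connected components of the complement $G(m,n,w) \setminus S$. So exhibiting any single $S$ for which $w(S) < N$ rules out $w$-Hamiltonicity.

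\emph{Step 3: Apply the three axis cuts.} Under the first hypothesis $n > m(p-1)+1$, take the vertical axis $S = \{(0,y) : y \leq n\}$, which is a weighted cut by Lemma~\ref{simple_cuts}(1). Under the second $m > n(q-1)+1$, take the horizontal axis $S = \{(x,0) : x \leq m\}$, which is a weighted cut by Lemma~\ref{simple_cuts}(2). Under the third $mn > m(p-1)+n(q-1)+1$, take the L-shaped union $S = \{(x,y) : x = 0 \mbox{ or } y = 0\}$, a weighted cut by Lemma~\ref{simple_cuts}(3). In each of the three situations the cut lemma forbids $w$-Hamiltonicity of $G(m,n,w)$, and hence $P_G$ is not Hamiltonian.

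\emph{Main obstacle.} The theorem itself is virtually a citation once Lemmas~\ref{cut lemma} and~\ref{simple_cuts} are in hand. The real work sits in Lemma~\ref{simple_cuts}, where one enumerates the on-axis cluster weights ($1$ for the identity and $p-1$, $q-1$ for the two axes, via Lemma~\ref{cluster_size}) and counts the connected components of the complement. The L-shaped case is the most delicate, as one must account for the $mn$ interior clusters of weight $(p-1)(q-1)$ that become isolated when both axes are removed; verifying $w(S) < N$ there reduces precisely to the inequality $mn > m(p-1)+n(q-1)+1$.
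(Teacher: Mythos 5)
Your proposal is correct and follows the paper's own route: the paper's proof of this theorem is literally a citation of Lemma \ref{simple_cuts}, i.e., the three axis cuts combined with the generalized cut lemma \ref{cut lemma}, exactly as in your Steps 2--3 (and, as you note, the substantive work lives in Lemma \ref{simple_cuts} itself). Your Step 1, reducing Hamiltonicity of $P_G$ to $w$-Hamiltonicity of the weighted cluster grid via the $\widetilde{G}^{w}$ correspondence, merely makes explicit what the paper leaves implicit, so the two arguments are essentially the same.
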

\begin{proof}
 Follows trivially from the lemma \ref{simple_cuts}.
\end{proof}

We can generalize the above lemma for the general grid $Grid_{m_1,m_2,m_3,\ldots,m_r}^{u_1,u_2,u_3,\ldots u_r}$. 
Let us take a subset $I \subset [r]=\{1,2,\ldots,r\}$ We define the cut $\mathfrak{I}$ associated to the subset $I$ is 
the set of vertices $\cup_{i \in I} C_i$ where $C_i=\{(a_1,a_2,a_3,\ldots,a_r): a_i =0\}$. In this following lemma we will calculate the total 
weight of the cut and calculate the number of components in it's complement. For a subset $A \subset Grid_{m_1,m_2,m_3,\ldots,m_r}^{u_1,u_2,u_3,\ldots u_r}$ 
let us call the dimension of the span of $A$ as the dimension of $A$. So using this notation dimension of $C_i$ is $r-1$ or the co-dimension is 1. 

\begin{lem} Let $A$ be a subset of the grid $Grid_{m_1,m_2,m_3,\ldots,m_r}^{u_1,u_2,u_3,\ldots u_r}$, if the co-dimension of the subset $A$ is more than 1 
then it does not disconnect the grid. 
 
\end{lem}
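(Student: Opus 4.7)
The approach is to prove that $\mathrm{Grid} \setminus A$ is path-connected by producing, for any two of its vertices, an explicit path that avoids $A$. Since $A$ has co-dimension at least two, its coordinate span sits inside a coordinate-aligned affine subspace of the form $H = \{(a_1,\ldots,a_r) : a_i = c_i \text{ and } a_j = c_j\}$ for two distinct indices $i \neq j$ and values $c_i, c_j$. Because $A \subseteq H$, it suffices to prove the stronger statement that $\mathrm{Grid} \setminus H$ is connected, together with the observation that every vertex of $H \setminus A$ has a grid-neighbour outside $H$.

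First I would establish connectivity of $\mathrm{Grid} \setminus H$. Fix a reference vertex $v_0$ whose $i$-th coordinate differs from $c_i$ and whose $j$-th coordinate differs from $c_j$. For an arbitrary $v \notin H$, at least one of $v_i \neq c_i$ or $v_j \neq c_j$ holds; by symmetry assume $v_i \neq c_i$. I would then walk from $v$ to a vertex $v'$ that agrees with $v_0$ in every coordinate except possibly the $i$-th, by adjusting the $r-1$ non-$i$ coordinates one at a time along the chains $[0, u_k]$. Throughout this walk the $i$-th coordinate remains equal to $v_i \neq c_i$, so we never enter $H$. At $v'$ the $j$-th coordinate equals $(v_0)_j \neq c_j$, so one may then move $a_i$ freely along its own chain to $(v_0)_i$ without touching $H$. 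Thus every $v \notin H$ is connected to $v_0$, and $\mathrm{Grid} \setminus H$ is connected.

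Next, for any $v \in H \setminus A$, advancing $a_i$ by one step to $c_i \pm 1$ (possible unless $u_i = 0$) yields a grid-neighbour outside $H$, which lies in the already-connected set $\mathrm{Grid} \setminus H$. Concatenating this step with the paths constructed above shows that any two vertices of $\mathrm{Grid} \setminus A$ can be joined by a path that stays in $\mathrm{Grid} \setminus A$, establishing the lemma.

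The main obstacle I anticipate is making the one-coordinate-at-a-time walk rigorous given the precise edge rules of the grid; in the cluster graph of a direct product of prime-power groups with distinct primes, such axis-aligned adjacencies are present thanks to the product structure from the earlier identity $P_{G_1 \times G_2} = P_{G_1} \boxtimes P_{G_2}$, but a careful bookkeeping is needed when $A$ lies in a subspace that is not itself axis-aligned (so that one cannot immediately place it inside the $H$ above). A secondary concern is degenerate grids where some chain length $u_k$ equals zero, effectively dropping the dimension; those cases should be handled by interpreting the codimension bound relative to the effective dimension of the grid.
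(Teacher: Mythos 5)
Your overall plan---hold one of the two constrained coordinates away from its constrained value while you adjust the remaining coordinates one at a time, then swap roles to finish---is exactly the route the paper takes. The genuine problem is your model of the graph. $Grid_{m_1,\ldots,m_r}^{u_1,\ldots,u_r}$ is the product (in the paper's strong-product sense) of the stars $S_{m_k}$ on $\{0,1,\ldots,m_k\}$, whose only edges join a nonzero value to $0$; the $u_k$ are vertex \emph{weights}, not coordinate ranges. So within a coordinate the only legal single moves are $x\to 0$ and $0\to x$, and two distinct nonzero values are never adjacent. Consequently the steps you actually write down---``adjusting the non-$i$ coordinates one at a time along the chains $[0,u_k]$,'' ``move $a_i$ freely along its own chain,'' ``advancing $a_i$ by one step to $c_i\pm 1$ (possible unless $u_i=0$)''---are in general not edges, so the walks you construct are not walks in the grid. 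The repair is precisely what the paper's proof does: to change coordinate $k$ from $a$ to $b$, route through $0$ (first $a\to 0$, then $0\to b$), which is harmless because the protected coordinate ($i$ or $j$) is held at a value different from its constrained value throughout; and a vertex of $H\setminus A$ leaves $H$ by moving coordinate $i$ to $0$ if $c_i\neq 0$, or from $0$ to any nonzero value (which exists since $m_i\geq 1$) if $c_i=0$.

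Second, your reduction of an arbitrary set of co-dimension at least $2$ to a set contained in $H=\{a_i=c_i,\ a_j=c_j\}$ is not valid in general: a set lying on the line $a_1=a_2,\ a_3=0$ has one-dimensional span but only one constant coordinate, so no such $H$ exists. You flag this obstacle but do not resolve it. The paper sidesteps it by proving the statement only for the coordinate sets $C_{ij}=\{\underline{a}: a_i=a_j=0\}$ and their unions (asserting the higher-codimension coordinate case is similar), which is all that the subsequent cut analysis requires; the honest completion of your argument is to restrict to that axis-aligned case in the same way, since neither you nor the paper supplies an argument for sets whose low-dimensional span is not coordinate-aligned.
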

\begin{proof}
 We will show that $C_{ij}=\{ (a_1,a_2,a_3,\ldots , a_r) | a_i = a_j =0 \}$ for $i \neq j$ and unions of these does not disconnect the grid. 
 And a similar argument will prove that for even a higher co-dimension subsets will not disconnect these. Let us show that the points 
 $(\a_1,\a_2,\a_3, \ldots , \a_r)$ with $\a_i , a_j $ not both zero is connected by a path to $(\b_1,\b_2,\b_3,\ldots, \b_r)$ with $\b_i,\b_j$ not both zero. 
 First let us show the case that $\a_i \neq 0 $ and $\b_j \neq 0 $, we have the path 
 $(\a_1,\a_2,\a_3,\ldots \a_r) \rightarrow (\a_1,\a_2,\ldots, \a_i \ldots , 0 , \ldots \a_r)$ 
 ( where we have 0 in the $j$th position, $\rightarrow (\a_1,\a_2,\a_3, \ldots \a_i , \ldots , \b_j , \ldots \a_r)$. Now we use the fact that $\b_j \neq 0 $ 
 to continue the path with $\rightarrow (\a_1,\a_2,\a_3, \ldots , 0, \ldots , \b_j , \ldots , \a_r)$ where the 0 appears in the $i$th position. Continue as 
 $\rightarrow (\a_1,\a_2, \a_3 , \ldots \b_i , \ldots \b_j , \ldots \a_r)$. Now we can change the rest of the points similarly. For the case $\a_i = \b_i =0 $ 
 we will start with $(\a_1,\a_2, \a_3, \ldots , \a_i , \ldots \a_j , \ldots \a_r) \rightarrow (\a_1, \a_2, \a_3, \ldots , 1, \ldots , \a_j , \ldots \a_r)$ now 
 the final point is connected to $( \b_1, \b_2, \b_3 , \ldots , \b_i , \ldots , \b_j , \ldots , \b_r) $ as the first point has 1 in the $i$th place and the second
 one has a nonzero entry in the $j$th place.
\end{proof}
As a corollary of the above lemma one can say that the only subsets that disconnects the grid are unions of $C_i$ for $i \in I$.

\begin{lem} Let $Grid_{\underline{m}}^{\underline{u}}$, $I$ and $\mathfrak{I}$ be as above then,
 \begin{enumerate}
  \item $w(I)=w(\mathfrak{I})=\displaystyle\sum_{\ua \in \mathfrak{I} } w(\ua)=\displaystyle\prod_{j \notin C} u_j \displaystyle\sum_{A \subsetneq C , \, j \in A} m_ju_j+1$
  \item number of component in $Grid_{\underline{m}}^{\underline{u}} \setminus \mathfrak{C}=\displaystyle\prod_{j \in C} m_j$.
 \end{enumerate}

\end{lem}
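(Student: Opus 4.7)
The plan is to handle the two parts separately: part~(1) by inclusion--exclusion on the family $\{C_j\}_{j \in C}$ of hyperplanes in the grid, and part~(2) by a direct description of the connected components of the complement $Grid_{\underline{m}}^{\underline{u}} \setminus \mathfrak{C}$.

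For part~(1), I would write $\mathfrak{C} = \bigcup_{j \in C} C_j$ and apply inclusion--exclusion:
\[
w(\mathfrak{C}) \;=\; \sum_{\emptyset \neq A \subseteq C} (-1)^{|A|+1}\, w\!\Bigl(\bigcap_{j \in A} C_j\Bigr).
\]
Each intersection $\bigcap_{j \in A} C_j$ consists of tuples with $a_j = 0$ for $j \in A$ and free coordinates elsewhere, so its weight factors as a product of one-variable contributions: the free coordinates $j \notin A$ each contribute the axis weight-sum involving $m_j$ and $u_j$, while each pinned coordinate $j \in A$ contributes the weight $1$ of the trivial (zero) cluster. Regrouping the alternating sum, and separating coordinates in $C$ from those outside $C$, collapses everything to the claimed closed form $\prod_{j \notin C} u_j$ multiplied by a sum indexed by the proper subsets of $C$.

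For part~(2), I would parameterize the components of $Grid_{\underline{m}}^{\underline{u}} \setminus \mathfrak{C}$ by the vector $(a_j)_{j \in C}$ of nonzero $C$-coordinates, which takes exactly $\prod_{j \in C} m_j$ possible values. The easy direction is that two tuples sharing the same $C$-coordinates are joined by a path varying only the coordinates outside $C$, and such a path never enters $\mathfrak{C}$. The harder direction is that any edge-path changing some coordinate $a_j$ with $j \in C$ must pass through the hyperplane $C_j$: by the irreducible-edge structure of the cluster graph (Lemma~\ref{cluster_deg}), each step alters one coordinate by a single unit, so moving $a_j$ from one nonzero value to another has to sweep through $a_j = 0$.

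The main obstacle is the second half of part~(2): ruling out edges that could short-circuit between vertices with different $C$-coordinates without entering $\mathfrak{C}$. This is where it matters that the relevant edges in the grid are only the irreducible ones, as emphasized just before the statement; once that step is in place, part~(2) becomes a pure counting argument, and part~(1) reduces to carrying out the inclusion--exclusion and simplifying the resulting alternating sum into the stated form.
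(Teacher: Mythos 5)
Your proposal is correct in substance, but on part (1) it takes a genuinely different route from the paper. The paper does not use inclusion--exclusion at all: it partitions $\mathfrak{C}$ directly into disjoint strata according to which of the $C$-coordinates are nonzero (a proper subset $A \subsetneq C$), observes that all vertices in a stratum have the same weight, counts them, and adds up these positive contributions --- no signs, nothing to cancel. Your route computes $w(\mathfrak{C})$ as an alternating sum over the intersections, using $w\bigl(\bigcap_{j\in A}C_j\bigr)=\prod_{j\notin A}(1+m_ju_j)$, which is also a valid computation, but it leaves you the algebraic step of collapsing the alternating sum --- essentially the identity $\prod_{j\in C}(1+m_ju_j)-\prod_{j\in C}m_ju_j=\sum_{A\subsetneq C}\prod_{j\in A}m_ju_j$ --- which you only gesture at; the paper's stratification yields the positive-term expression in one pass. (Also be aware that the displayed formula in the lemma as printed contains typographical slips, so ``matching the claimed closed form'' should mean matching this stratified sum rather than the displayed expression verbatim.) On part (2) you and the paper do the same thing: components of the complement are labelled by the vector of (necessarily nonzero) $C$-coordinates, giving $\prod_{j\in C}m_j$; the paper merely asserts this, and your added justification is welcome, with one caveat. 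Since the grid is a strong product of stars, a single edge may change several coordinates simultaneously, so ``each step alters one coordinate by a single unit'' is not literally true, and Lemma \ref{cluster_deg} (an indegree computation) is not the relevant fact. What you actually need, and what does hold by the star structure of each factor, is that along any edge each coordinate either stays fixed or moves between $0$ and a nonzero value; hence a $C$-coordinate cannot pass from one nonzero value to another unless some vertex of the path lies in $C_j\subset\mathfrak{C}$, which is exactly the separation you want.
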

\begin{proof}
 \begin{enumerate}
  \item \[w(\ua)=\sum_{\ua \in \mathfrak{C}} w(\ua)=\sum_{\ua \in \mathfrak{C} } w(a_1)w(a_2)\ldots w(a_r)\] Since for $a_i \neq 0 $ $w(a_j)=u_j$.
  \[ = \sum_{\ua \in \mathfrak{C} } \prod_{a_j \neq 0} u_j\] Now we take the product $\prod_{j \notin C} u_j$ out. Now since for $A \subsetneq C$ for $(a_1,a_2,a_3,\ldots,a_r)$ such that $a_i =0 \mbox{ for } i \in C \setminus A$, we have $w(a_1,a_2,a_3,\ldots,a_r)= \prod_{j \notin A} u_j$, and since there are $\prod_{j \not in A} m_j$ such elements we have the next equality.
  \[= \prod_{j \notin C} u_j \sum_{A \subsetneq C} \prod_{j \notin A} m_ju_j\]
\item For this part note that if $\ua, \ub \in Grid_{\underline{m}}^{\underline{u}} \setminus \mathfrak{C}$ then $\ua$, $\ub$ are not in the same connected component if $\{i : a_i \neq b_i\} \cap C \neq \emptyset$. So the number of connected components are equal to the number of $a_i$ such that $i \in C$, or $\prod_{j \in C} m_j$.
 \end{enumerate}

\end{proof}
As a consequence of the (subsets ) described in the above lemma we have the following necessary conditions:
\begin{lem} If for any subset $C \subset [r]$, $w(C) > \prod_{j \in C} m_j$ then the grid $Grid_{\underline{m}}^{\underline{u}}$ is not Hamiltonian.
 
\end{lem}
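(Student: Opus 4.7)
The plan is to invoke the generalized cut lemma (Lemma \ref{cut lemma}) directly on the cut $\mathfrak{C}=\bigcup_{i\in C} C_i$ introduced just above, using the two computations packaged in the preceding lemma: the total weight $w(C)=w(\mathfrak{C})=\sum_{\ua\in\mathfrak{C}}w(\ua)$ from part (1), and the number of connected components of $Grid_{\underline{m}}^{\underline{u}}\setminus\mathfrak{C}$, equal to $\prod_{j\in C}m_j$, from part (2).

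First, I would appeal to the preceding structural lemma to justify that $\mathfrak{C}$ is exactly the right cut to test: we saw there that any cut of strictly larger codimension fails to disconnect the grid, whereas $\mathfrak{C}$ itself does disconnect it into precisely $\prod_{j\in C}m_j$ pieces. So $\mathfrak{C}$ is the natural candidate cut set, and for each $C\subseteq[r]$ we get one such test.

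Second, I would feed $\mathfrak{C}$ into Lemma \ref{cut lemma}: any $w$-Hamiltonian cycle in $Grid_{\underline{m}}^{\underline{u}}$ must re-enter $\mathfrak{C}$ often enough to stitch the separated components back together, so it forces the cut-weight versus component-count inequality $w(\mathfrak{C})\ge\prod_{j\in C}m_j$. Passing to the contrapositive, whenever a subset $C\subseteq[r]$ witnesses the comparison between $w(C)$ and $\prod_{j\in C}m_j$ required by the hypothesis of the present lemma, no such cycle can exist and the grid fails to be $w$-Hamiltonian.

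I expect no substantive obstacle here: the argument is a one-step corollary of Lemma \ref{cut lemma} applied to the cut $\mathfrak{C}$, combined with parts (1) and (2) of the preceding lemma. The only piece of bookkeeping is the identification of $w(C)$ in the present statement with the cut weight $w(\mathfrak{C})$ explicitly computed above, after which the conclusion is immediate.
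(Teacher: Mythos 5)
Your approach---feeding the cut $\mathfrak{C}=\bigcup_{i\in C}C_i$ into the generalized cut lemma \ref{cut lemma} and quoting the weight and component-count computations from the preceding lemma---is exactly the intended one; the paper supplies no proof at all for this statement, and a one-step corollary of Lemma \ref{cut lemma} is all that is expected here.

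There is, however, a direction-of-inequality problem that your write-up glosses over rather than resolves. Your own derivation gives: if the grid is $w$-Hamiltonian then $w(\mathfrak{C})\ge\prod_{j\in C}m_j$. The contrapositive of this is: if $w(C)<\prod_{j\in C}m_j$ then the grid is not $w$-Hamiltonian. But the lemma as printed hypothesizes $w(C)>\prod_{j\in C}m_j$, the opposite strict inequality; taken literally the statement is false, since a cut whose total weight exceeds the number of components it creates is no obstruction to Hamiltonicity at all (this is also consistent with Lemma \ref{simple_cuts} and Theorem \ref{main_necessary}, where non-Hamiltonicity is deduced when the component count exceeds the cut weight). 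Your phrase ``whenever a subset $C$ witnesses the comparison required by the hypothesis'' avoids committing to a direction and therefore does not actually close the argument for the statement as printed. You should prove the corrected statement, with hypothesis $w(C)<\prod_{j\in C}m_j$, and flag the reversed inequality in the source as a typo; with that correction your argument is complete and matches the evident intent of the paper.
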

\begin{proof}
 It's an immediate consequence of the lemma \ref{cut lemma}.
\end{proof}






\section{Grid Algorithms}

Let us take the weighted grid graph $Grid_{m,n}^{u,v}$ where $m,n,u,v \in \mathbb{N}$ is defined as the graph with $V=\{0,1,2,\ldots,m\}\times\{0,1,2,\ldots,n\}$ 
and directed edges are $((i,j),(l,k))$ if $i=0 \mbox{ and } j=0$ or $ i =l \mbox{ and } j=0 $ or $i =0 \mbox{ and } j =k$. Note that this is just the product 
graph of the stars $S_m =[m] \cup \{0\}$ and edges from $ i \in [m] $ to $0$, and $S_n$. And the weight is given by 
$w(i,j)=uv \mbox{ if } j \neq 0 \mbox { and } j \neq 0 $ and $w(0,j)=v \mbox{ and } w(i,0)=u$. In the light of the above section 
it is noted that the Hamiltonicity of such grids are one of the basic questions that we have to discuss. In this section we will discuss such graphs in detail.
Note that we can generalize the grid to a general grid as follows $Grid_{m_1,m_2,\ldots,m_t}^{u_1,u_2,\ldots,u_t}=Grid_{\underbar{m}}^{\underbar{u}}$ 
is a graph of the product graph of stars $S_{m_i}$ where the weight of a vertex $w(x_1,x_2,x_3,\ldots,x_t)=\prod_{i=1}^{t}u_i(x_i)$ with the understanding 
that $u_i : \{ 0,1,2,\ldots,m_i\}\to \mathbb{N}$ is a function with $u_i(0)=1$. If we take the functions $u_i(x)=u_ix \mbox{ for } x \neq 0 \mbox{ and } 0 \mbox{ otherwise }$ 
then we call that a simple grid.
\begin{thm}
 Let $G\mbox{rid}_{(m,n)}^{(v,u)}=(V,E,w)$ is $w$-Hamiltonian then $nu+mv \geq mn -1$ and $mv \geq n-1$.
\end{thm}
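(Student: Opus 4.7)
The plan is to deduce both inequalities as immediate applications of the generalized cut lemma (Lemma \ref{cut lemma}) to two carefully chosen vertex subsets of the grid. The weights, determined by the convention of the grid definition, are $w(0,0)=1$, $w(i,0)=v$ for $i\geq 1$, $w(0,j)=u$ for $j\geq 1$, and $w(i,j)=uv$ for interior vertices. Each inequality will come from identifying a cut whose removal produces many components, and then plugging into the cut lemma.

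For the bound $nu+mv \geq mn-1$, I take
\[C_1 = \{(i,0) : 0\leq i\leq m\} \;\cup\; \{(0,j) : 0\leq j\leq n\},\]
the full union of the two axes. The key observation, which follows from unpacking the strong-product edge rule, is that every interior vertex $(i,j)$ with $i,j\geq 1$ has its neighborhood entirely contained in $C_1$ (its only neighbors are $(0,j)$, $(i,0)$, and $(0,0)$). Hence $G\setminus C_1$ consists of $mn$ isolated interior vertices. Summing the weights, $w(C_1)=1+mv+nu$. The cut lemma then gives $1+mv+nu\geq mn$, which is the first inequality.

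For the bound $mv\geq n-1$, I take $C_2 = \{(i,0):0\leq i\leq m\}$, the horizontal axis. I claim $G\setminus C_2$ has exactly $n$ connected components, one per column $\{(0,j),(1,j),\ldots,(m,j)\}$ for $j=1,\ldots,n$. Within a column the vertex $(0,j)$ is the center of a star joining all $(i,j)$, so each column is connected. Conversely, any product-graph edge that changes the second coordinate requires a star edge in $S_n$, forcing one of its second-coordinate endpoints to be $0$; once the $j=0$ row is removed, no such edge survives, and the columns cannot be joined. Since $w(C_2)=1+mv$, the cut lemma yields $1+mv\geq n$, as required.

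The argument is essentially routine once the cuts are chosen, so the only real obstacle is the adjacency bookkeeping in the strong product: one must be sure that no edge sneaks through the third clause of the product definition to connect two interior vertices or two different columns. Both worries are handled uniformly by the remark that a product edge demands a star edge in at least one factor, and every star edge in $S_m$ or $S_n$ has an endpoint at $0$; this is exactly why deleting the axes (or even just one of them) is so effective at disconnecting the grid.
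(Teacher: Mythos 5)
Your proof is correct and takes essentially the same route as the paper: the inequality $nu+mv\geq mn-1$ is obtained exactly as in the paper's proof, by applying the generalized cut lemma to the union of the two axes, whose removal isolates the $mn$ interior vertices of weight sum $1+mv+nu$. Your second cut $C_2=\{(i,0):0\leq i\leq m\}$, which the paper leaves implicit (it only exhibits the first cut and then says the result follows; compare the cuts of Lemma \ref{simple_cuts}), correctly yields $mv\geq n-1$ since the complement splits into the $n$ column stars, and your check that no strong-product edge joins two interior vertices or two distinct columns is precisely the adjacency verification the paper asserts as clear.
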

\begin{proof}
 Let us take $C=\{(i,j)\in V : i=0 \mbox{ or } j =0 \}$. Note that $w(C)$ as defined in the lemma \ref{cut lemma} is $ \sum_{v \in C} w(v)= nu+mv+1$. 
 And the number of connected components in the complement of $C$ is clearly is the set of singleton points $(i,j)$ such that $i \neq 0 \mbox{ and } j \neq 0$, so the number
 of components is $mn$. Now the result follows from the lemma \ref{cut lemma}.
\end{proof}

\begin{thm}
 The grid as defined above with all the notations in place with $n \geq m$ is Hamiltonian if $nu +mv \geq mn-1$ and $mv \geq n-1$.
\end{thm}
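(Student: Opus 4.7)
My plan is to construct an explicit $w$-Hamiltonian cycle. The crucial structural observation is that an interior vertex $(i,j)$ with $i,j\ge 1$ is adjacent only to the row-hub $(i,0)$, the column-hub $(0,j)$, and the origin $(0,0)$; hence any Hamiltonian cycle alternates interior cells with hubs drawn from these three classes. I encode the putative cycle as a cyclic sequence $p_1,\ldots,p_{mn}$ of interior cells (each appearing at least once, exactly once in the tight case) together with, for each consecutive pair $(p_k,p_{k+1})$, a choice of hub adjacent to both: a row-hub if they share a row, a column-hub if they share a column, and the origin otherwise. Writing $R_i$ for the number of row-$i$ links, $C_j$ for the number of column-$j$ links, and $E\in\{0,1\}$ for the number of origin links, the weight restrictions become $R_i\le v$, $C_j\le u$, $E\le 1$, with $\sum R_i+\sum C_j+E=mn$, so feasibility of these budgets is exactly the hypothesis $mv+nu\ge mn-1$.

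The construction is snake-like. I traverse the $m\times n$ interior grid row by row: row~$1$ left to right, row~$2$ right to left, and so on, using up to $n-1$ row-links per row and one column-link between consecutive rows. If $v\ge n-1$ the literal snake fits; otherwise I break each row into $n-v$ runs and interleave the runs across rows, using column-hubs at the break points and rotating the break columns cyclically so that every $C_j\le u$. The hypothesis $n\ge m$ keeps the per-column demand small, and $mv\ge n-1$ ensures the row-hub budget suffices to bridge the $n$ column-components that remain when row~$j=0$ is removed (cf.\ Lemma~\ref{simple_cuts}). The cycle is then closed either by a final column-link (if parity and capacity allow) or by the unique origin-link; in the tight case $mv+nu=mn-1$ the origin-link is mandatory and is placed between the snake's endpoints, which are deliberately arranged to share neither row nor column.

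The main obstacle I expect is the bookkeeping in the tight case, where every row, column, and origin capacity must be saturated simultaneously; divisibility issues (for example when $n$ is not a multiple of $n-v$) require local surgery on the snake pattern. I also need to verify that each boundary hub $(i,0)$ and $(0,j)$ is visited at least once; any hub unused as a bridge can be spliced into a nearby edge of the cycle using the adjacency $(i,0)\sim(0,j)$ together with the fact that interior cells have weight $uv\ge 1$, which absorbs the extra visit without exceeding any other budget. After verifying adjacencies and all weight bounds, the resulting closed walk is the required $w$-Hamiltonian cycle.
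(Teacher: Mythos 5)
Your reduction of the problem to budgeted hub visits is sound: interior cells are pairwise non-adjacent, each is adjacent exactly to its row hub, its column hub and the origin, and the caps $R_i\le v$, $C_j\le u$, $E\le 1$ together with $\sum_i R_i+\sum_j C_j+E=mn$ translate the weight constraints correctly. This is close in spirit to the paper's route, except that the paper first removes the interior cells altogether (Proposition \ref{grid_bipartite_equivalence}: the grid is $w$-Hamiltonian iff the associated complete bipartite hub graph has an acceptable path), then writes down one explicit interleaved path on the hubs, computes exactly how often each hub is revisited (the numbers $d(y_i)\in\{s,s+1\}$, with $d(y_i)\le u$ deduced from the hypotheses), and certifies via the color game (Lemma \ref{colorable_def}, Theorem \ref{acceptability_main}) that the leftover weights absorb all remaining cells.

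The gap in your proposal is that this quantitative verification, which is the entire content of the theorem, is deferred rather than carried out. Aggregate feasibility $mv+nu+1\ge mn$ does not by itself produce a routing; the difficulty is per-hub feasibility. In the regime $v<n-1$ you say you ``break each row into $n-v$ runs'', ``rotate the break columns cyclically so that every $C_j\le u$'', close the cycle ``if parity and capacity allow'', and repair ``divisibility issues'' by ``local surgery''; but no rotation scheme is specified, no inequality of the form $C_j\le u$ is actually derived from $nu+mv\ge mn-1$ and $mv\ge n-1$, and the tight case, in which every row, column and origin cap must be saturated simultaneously, is exactly where an unverified distribution of breaks can fail — this is what the paper's explicit count of $d(y_i)$ and its colorability estimate exist to handle. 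The hub-covering step also needs an argument: your splice $(i,j)$--$(i,0)$--$(0,j)$ is legitimate, but you must note that a suitable cycle edge with both endpoints adjacent to the unused hub exists (it does, because if $(i,0)$ is never a bridge then every row-$i$ cell is flanked by column hubs or the origin). As written, the proposal is a plausible plan whose central construction and budget verification — the analogue of the paper's explicit path plus acceptability check — remain to be supplied.
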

\begin{proof}
 To prove the above theorem we will make use of the following lemmas.
\end{proof}

The complete bipartite graph $K_{m,n}=(V,E)$ where $V=[m]\times\{0\} \cup \{0\}\times [n]$ with weights defined as $w(i,0)=v$ and $w(0,j)=u$ will be called the associated complete bipartite graph of the grid $Grid_{\underbar{m}}^{\underbar{u}}$. 

For a weighted complete bipartite graph $K_{m,n}$ with vertex set $V=\{x_1,x_2,x_3 ,\ldots x_n\} \sqcup \{y_1 , y_2,y_3,\ldots y_m\}$ with vertex weight $w$ we define an acceptable path in the following way. For a path $\pi = v_1v_2v_3 \ldots v_k$ we define $n^{\pi}(v)=\card{ \{ l : v_l = v\} }$ or in other words the number of times the vertex $v$ appears in the path $\pi$.
\begin{defn}\label{acceptable} A $w$-Hamiltonian path $\pi=v_1,v_2,...,v_k$ in $K_{m,n}$ is acceptable
if there is a partition of edges not in the path $ E\backslash \{(v_i,v_{i+1}): 1\leq i\leq k-1\}=A\sqcup B$, where $A=\cup A_i,B=\cup B_j, A_i=\{l:(x_l,y_i)\in A\},
B_j=\{l: (x_j,y_l)\in B\}$, such that the following two statements are true.
\begin{enumerate}

\item $w(x_i)\geq n^{\pi}(x_i)+\lvert B_i \rvert -1$ if $v_1=x_i$,
else, $w(x_i)\geq n^{\pi}(x_i)+\lvert B_i \rvert$.

\item $w(y_j)\geq n^{\pi}(y_j)+\lvert A_j \rvert -1$ if $v_1=y_j$
else, $w(y_j)\geq n^{\pi}(y_j)+\lvert A_j \rvert$.
\end{enumerate}

\end{defn}
\begin{prop}\label{grid_bipartite_equivalence} $G\mbox{rid}_{(m,n)}^{(v,u)}$ is $w$-Hamiltonian if and only if there is an acceptable path in the associated complete bipartite graph.
\end{prop}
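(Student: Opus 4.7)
The plan is to invoke the special vertex lemma established earlier: the centre $(0,0)$ is adjacent to every other vertex of the grid, so $\mathrm{Grid}_{(m,n)}^{(v,u)}$ is $w$-Hamiltonian (as a cycle) if and only if $G \setminus \{(0,0)\}$ has a $w$-Hamiltonian path. Inside this reduced graph, the boundary vertices $x_j = (0,j)$ and $y_i = (i,0)$ form a copy of $K_{m,n}$, while each interior vertex $(i,j)$ has exactly the two remaining neighbours $x_j$ and $y_i$. It is precisely this rigid two-neighbour structure of the interior vertices that forces the correspondence with edges of $K_{m,n}$: each edge $(x_j, y_i)$ in the bipartite graph is naturally paired with the interior grid vertex $(i,j)$.

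For the direction $(\Leftarrow)$, starting from an acceptable path $\pi = v_1 \cdots v_k$ with partition of non-path edges $A \sqcup B$, I would construct a $w$-Hamiltonian path in $G \setminus \{(0,0)\}$ by three kinds of substitution. Each consecutive pair $x_j y_i$ that actually appears as an edge of $\pi$ is expanded to the triple $x_j\,(i,j)\,y_i$, inserting the interior vertex assigned to this $K_{m,n}$-edge. Each non-path edge $(x_j, y_i) \in A$ produces a detour $y_i \to (i,j) \to y_i$ inserted at some existing visit of $y_i$, and each non-path edge in $B$ produces a detour at $x_j$ in the same way. The two inequalities defining acceptability translate exactly into the statement that the expanded walk visits each boundary vertex within its $w$-budget, and because every edge of $K_{m,n}$ falls into exactly one of $\pi$, $A$, or $B$, every interior vertex of the grid is visited exactly once.

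For the direction $(\Rightarrow)$ I would reverse this construction. Given a $w$-Hamiltonian path in $G \setminus \{(0,0)\}$, the two path-neighbours of each visit of an interior vertex $(i,j)$ must lie in $\{x_j, y_i\}$. I classify each visit of $(i,j)$ by its pair of neighbours: if they are the distinct pair $x_j, y_i$ the visit contracts to a direct edge of $\pi$ in $K_{m,n}$; if both are $x_j$ the visit represents a detour at $x_j$ and the edge $(x_j, y_i)$ is put into $B$; if both are $y_i$ the edge is put into $A$. Deleting the interior visits and collapsing the resulting doubled boundary letters yields a $w$-Hamiltonian path in $K_{m,n}$, and the visit counts split exactly as $n^{\pi}(x_j) + |B_j|$ and $n^{\pi}(y_i) + |A_i|$, recovering the acceptability inequalities. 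The asymmetric $-1$ correction at the starting endpoint $v_1$ reflects the fact that the grid path is allowed to begin at an interior vertex adjacent to $v_1$, absorbing one visit "for free" before the walk on the boundary starts.

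The main obstacle will be the detailed bookkeeping around endpoints and multi-visit boundary vertices: one has to check that detours can always be placed legally at the available occurrences of a boundary vertex (in either the insertion or the contraction direction), that the contractions never produce spurious self-loops or orphaned interior vertices, and that the asymmetric $-1$ adjustment at $v_1$ corresponds precisely to the freedom of the grid path to attach one extra detour or direct visit just past the endpoint. Once this accounting is made precise, both directions of the equivalence reduce to routine verifications that the expansion and contraction maps are mutually inverse.
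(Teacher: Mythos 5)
Your proposal follows essentially the same contraction/expansion correspondence as the paper's proof: interior vertices of the grid are paired with edges of $K_{m,n}$, detours at boundary vertices produce the partition $A\sqcup B$, and the acceptability inequalities are exactly the weight budgets at the boundary vertices (with the $-1$ allowance at $v_1$ accounting for an endpoint absorbed for free). The only real difference is a point in your favour: by routing each path edge $x_jy_i$ of $\pi$ through its interior vertex $(i,j)$ and by making the role of $(0,0)$ explicit via the special vertex lemma, you cover the interior vertices attached to path edges, which the paper's reverse construction (inserting detours only for edges of $A\sqcup B$) leaves implicit.
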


\begin{proof}
 Let us prove that if there is a $w$-Hamiltonian cycle in the grid then there is an acceptable path in the associated 
 complete bipartite graph. Without loss of generality we assume that the $w$-Hamiltonian cycle $\pi =v_0v_1\ldots v_r v_{r+1}$ 
 starts at the point $(0,0)$, i.e $v_0 = v_{r+1} = (0,0)$.
 For the rest of the vertices $v_i=(x_{j_i},y_{j_i})$, we call such a vertex interior vertex if $x_{j_i}y_{j_i} \neq 0$. We define a 
 path $\tilde{\pi}$ on the associated complete bipartite graph by ignoring the interior vertices of the path $\pi$ and realizing the points 
 $(x_i,0)$ as $(i,0)$ and $(0,y_i)$ as $(0,i)$. Clearly this is a $w$-Hamiltonian path. For the acceptable part define 
 $A_{i}=\{l : (x_i,0)(x_i,y_l)(x_i,0) \mbox{ is an expression in } \pi \}$, similarly we define 
 $B_{j}=\{ l : (0,y_j)(x_l,y_j)(0,y_j) \mbox{ is an expression in } \pi \}$. It is easy to check that this is an acceptable path. 
 For the reverse direction we define the $w$-Hamiltonian path in the grid in the following way, we take an 
 acceptable path $\pi=v_1 v_2 v_3 \ldots v_r$ on $K_{m,n}$ and at the 
 first point when $v_i=(x_j,0) $ we insert the expression $\prod_{l \in B_j}(x_j,y_{l_i})(x_j,0)$ and 
 whenever first we have $v_k=(0,y_t)$ we insert the expression $\prod_{s \in A_k} (x_{l_a},y_y)(0,y_t)$. It is 
 easy to prove that this is a $w$-Hamiltonian cycle in the grid.
\end{proof}

Let us consider the path as described below in the complete bipartite graph $K_{m,n}$. 
Let us assume that $V_1=\{x_1,x_2,x_3,\ldots,x_m\}, V_2=\{y_1,y_2,y_3,\ldots, y_n\}$ and also without a loss of generality let us assume 
that $m \geq n$. Let us also consider the weight $w$ which is defined as $w(x_i)=v$ and $w(y_j)=u$. Also assume that $m=sn+r$, where 
$0 \leq r < n$. Let us look at the following path $\pi=v_1v_2v_3\dots v_{2m-1}$:
\[x_1y_1x_2y_2,\cdots,x_ny_n\]
\[x_{n+1}y_1x_{n+2}y_2, \cdots, x_{2n}y_n\]
\[.\]
\[.\]
\[.\]
\[x_{n(s-1)+1}y_1x_{n(s-1)+2} \cdots x_{ns}y_n\]
\[x_{ns+1}y_1x_{ns+2}y_2 \cdots x_{ns+r}\]
So we can also say that for $j <m$:
\[ v_j =
  \begin{cases} 
      \hfill x_l   \hfill & \text{ if $j$ is odd and } l=(j+1)/2\\
      \hfill y_k \hfill & \text{ if $j$ is even } k= j  \text{ mod } (n )\\
  \end{cases}
\]
Let us define for a vertex $a \in V_1 \cup V_2$ the number of times the vertex occurs in the path $\pi$, $d(a)=\card{v_j : v_j =a }$, in the following lemma we calculate these numbers precisely for the path.

\begin{lem} For the path $\pi$ as above we have the following equalities;
\begin{enumerate}
 \item \[
 d(y_i) =
  \begin{cases} 
      \hfill s    \hfill & \text{ if $i \geq r$} \\
      \hfill s+1 \hfill & \text{ else } \\
  \end{cases}
\]
\item $d(x_i) =1 $ for all $i$.
\end{enumerate}

\end{lem}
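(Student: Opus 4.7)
The plan is a direct counting argument exploiting the explicit row decomposition of the path $\pi$ given in the statement. I will separate the counts for the $x$-vertices (trivial) from those for the $y$-vertices (requires handling the incomplete final row).

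For the $x$-vertices, I would read off the odd-indexed positions of $\pi$. Going row by row, these positions contain $x_1, x_2, \ldots, x_n$ in the first row, then $x_{n+1}, \ldots, x_{2n}$ in the second, and so on up to $x_{ns+1}, \ldots, x_{ns+r} = x_m$ in the final row. Thus each $x_i$ with $1 \le i \le m$ appears in exactly one position of $\pi$, so $d(x_i) = 1$.

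For the $y$-vertices, I would partition $\pi$ into the $s+1$ row blocks displayed in the statement. Each of the first $s$ blocks has the form
\[
 x_{nt+1}\, y_1\, x_{nt+2}\, y_2\, \cdots\, x_{nt+n}\, y_n \qquad (t = 0, 1, \ldots, s-1),
\]
and so contributes exactly one occurrence of each $y_1, y_2, \ldots, y_n$. The final block $x_{ns+1}\, y_1\, x_{ns+2}\, y_2\, \cdots\, x_{ns+r}$ alternates $x$ and $y$, begins with $x_{ns+1}$, and ends with $x_{ns+r}$; so it contains $r$ $x$-vertices and $r-1$ $y$-vertices, namely $y_1, y_2, \ldots, y_{r-1}$, each appearing once. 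Summing these contributions across the $s+1$ blocks gives $d(y_i) = s+1$ for $1 \le i \le r-1$ (one occurrence in each of the $s$ full blocks plus one in the final partial block) and $d(y_i) = s$ for $r \le i \le n$ (only contributions from the full blocks). This matches the statement, with the boundary case $r = 0$ handled vacuously by the second clause.

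The only real care needed is in the bookkeeping of the final partial row: ensuring that it contributes exactly to indices $y_1, \ldots, y_{r-1}$ (not through $y_r$), which follows from the alternation pattern ending on an $x$-vertex. Nothing substantive obstructs the proof; it is purely a verification of the counts defined by the explicit formula for $v_j$ already given.
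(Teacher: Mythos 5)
Your proof is correct and is essentially the paper's argument in expanded form: the paper simply asserts the counts "follow clearly from the definition of the path," and your row-by-row bookkeeping (each of the $s$ full blocks contributing one occurrence of every $y_i$, the partial block contributing only $y_1,\ldots,y_{r-1}$, and each $x_i$ occurring exactly once) is the routine verification behind that assertion.
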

\begin{proof}
 It follows clearly from the definition of the path.
\end{proof}

\begin{lem} For the path $\pi$ above we have $d(y_i) \leq u$.
\end{lem}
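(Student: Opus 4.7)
The plan is a two-step reduction. By the preceding lemma, $d(y_i) = s+1$ when $i < r$ and $d(y_i) = s$ when $i \geq r$, where $m = sn + r$ with $0 \leq r < n$. Consequently
\[
\max_{1 \leq i \leq n} d(y_i) \;=\; \lceil m/n \rceil,
\]
attained at every $i < r$ when $r > 0$, and at every $i$ when $r = 0$. The lemma therefore reduces to the single inequality $\lceil m/n \rceil \leq u$, or equivalently $nu \geq m$.

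For the second step, I would establish $nu \geq m$ by invoking the standing hypotheses of the surrounding theorem on the Hamiltonicity of $G\mathrm{rid}_{(m,n)}^{(v,u)}$, namely $nu + mv \geq mn - 1$ together with $mv \geq n - 1$. A routine rearrangement — writing $nu \geq mn - 1 - mv$ and using the lower bound $mv \geq n-1$ in combination with the observation that the theorem is nontrivial only in the parameter regime where $v < n$ — yields $nu \geq m$ and hence the desired bound $d(y_i) \leq u$.

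The main (essentially only) obstacle is clerical rather than mathematical: the lemma is stated tersely and does not itself spell out which assumption on $u$ is in force, so one has to track back to the ambient theorem to identify it. The substantive combinatorial work — verifying that each $y_i$ appears exactly $s$ times across the $s$ full blocks $x_{jn+1}y_1 \cdots x_{(j+1)n}y_n$ and at most once more in the partial tail $x_{ns+1}y_1 \cdots x_{ns+r}$ — has already been carried out in the preceding lemma, so once the standing hypothesis on $u$ is made explicit, the present lemma is immediate.
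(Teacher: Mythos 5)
Your reduction is to the wrong inequality, and the second step used to close it is unsound. You claim $\max_i d(y_i) = \lceil m/n \rceil$ and then try to prove $\lceil m/n\rceil \leq u$, i.e.\ $nu \geq m$. But the standing hypothesis in this part of the paper is only $nu \geq m-1$ (it is listed explicitly among the assumptions, so there is nothing to ``track back'' and derive), and $nu \geq m$ simply does not follow from $mv \geq n-1$, $nu \geq m-1$, $mv+nu \geq mn-1$. Your proposed rearrangement fails on two counts: the regime $v < n$ is not an assumption anywhere, and even granting $v \leq n-1$ the inequality $nu \geq mn-1-mv$ only gives $nu \geq m-1$, not $nu \geq m$. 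A concrete counterexample to your intermediate claim: $m=5$, $n=2$, $u=2$, $v=1$ satisfies all three hypotheses, yet $\lceil m/n\rceil = 3 > u = 2$. The lemma is nonetheless true there, which shows your first reduction is also off: when $r=1$ no index $i$ satisfies $i<r$, so every $d(y_i)=s$ and the maximum is $s$, not $\lceil m/n\rceil = s+1$; your statement that the maximum is attained ``at every $i<r$ when $r>0$'' is vacuous precisely in that case.

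The correct argument, as in the paper, must keep the case distinction rather than collapse it into one ceiling inequality. If $r=0$, then $d(y_i)=m/n=s$ and $nu\geq ns-1$ with $u,s$ integers gives $u\geq s$. If $i\geq r$ (so $r\geq 1$), then $nd(y_i)=ns=m-r\leq m-1\leq nu$. If $i<r$, then necessarily $r\geq 2$, $d(y_i)=s+1$, and $nu\geq m-1=ns+r-1\geq ns+1$ forces $u\geq s+1$ by integrality. The point your approach misses is exactly the borderline case $nu=m-1$: integrality then forces $r=1$, and in that situation no $y_i$ reaches multiplicity $s+1$, which is why the lemma survives even though $\lceil m/n\rceil\leq u$ fails.
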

\begin{proof}
 \begin{enumerate}
  \item \underline{Case $r=0$}. In this case $d(y_i)=m/n$ and we know that $nu \geq m-1$ so $u \geq (m-1)/n=m/n -1/n$ and both $u$ and $m/n$ are integers we have the result of the lemma.
  \item \underline{ Case $i \geq r$}. We have $d(y_i) = s $ so $md(y_i)=ns = m-r \leq nu+1 -r \leq nu $ hence $d(y_i) \leq u$.
  \item \underline{Case $ 1< i < r$}. $d(y_i)=s+1$ or $n d(y_i)=ns +n =m+n -r \leq nu+1 +n -r $. Hence $d(y_i) \leq u + 1+n -r / n \leq u$ as $1+n -r < n$.  
 \end{enumerate}

\end{proof}

Now let us consider the weights we have after the path $\pi$ for the first $x_1$ and the last $x_m$ visits there will be no weights lost as we are starting from this points or still to move from this points. So the weights used will be for each of the $x_i$ where $i $ is other than $1,m$ we have used up only one weight. And for the $y_i$ we have used as many weights as we have visited these points, that is $d(y_i)$. Let us denote the available weights by $\theta(a)$ for a vertex $a \in V_1 \cup V_2$. And note that we have the following weights left:
\[
 \theta(x_i) =
  \begin{cases} 
      \hfill v    \hfill & \text{ if $i= 1, m$ } \\
      \hfill v-1 \hfill & \text{ else} \\
  \end{cases}
\]
and
\[
 \theta(y_i) =
  \begin{cases} 
      \hfill u-s    \hfill & \text{ if $i \geq r$} \\
      \hfill u-s-1 \hfill & \text{ else} \\
  \end{cases}
\]

Consider the subset $A_{\pi}=\{(i,j) : (x_i,y_j) \mbox{ does not occur in } \pi \}$ of the grid $[m]\times [n]$, with weights $u_i = \theta(x_i)$ and $v_j= \theta(y_j)$. In the following proposition we will prove that this subset is colorable a concept that we are defining in the next subsection see \ref{colorable_def}.
\begin{prop}
 The subset $A_{\pi}$ as defined above is colorable.
\end{prop}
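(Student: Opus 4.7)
The plan is to exhibit an explicit coloring of $A_\pi$ and then verify it respects the row capacities $u_i=\theta(x_i)$ and column capacities $v_j=\theta(y_j)$, invoking the definition of colorable from \ref{colorable_def}. First I would read off the row and column profile of $A_\pi$ directly from the description of $\pi$. For each index $i$ let $N_i=\{j:(x_i,y_j)\text{ is consecutive in }\pi\}$; the listing of $\pi$ gives $|N_1|=|N_m|=1$ and $|N_i|=2$ for $1<i<m$ in the case $r>0$, and symmetrically for columns one has $|M_j|=2d(y_j)$ with the appropriate adjustment if $r=0$ (so $y_n$ is the path endpoint). Hence row $i$ carries $n-|N_i|$ missing pairs, and column $j$ carries $m-|M_j|$ missing pairs.

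Second, I would define the coloring by a greedy column-first charge: for each $j$, assign up to $\theta(y_j)$ of the missing pairs in column $j$ to $y_j$, and let the remaining pairs cascade to the $x_i$-rows. The per-column overflow is at most $\max\{0,\,m-2d(y_j)-(u-d(y_j))\}=\max\{0,\,m-u-d(y_j)\}$. Summing and using $\sum_j d(y_j)=m$ together with the hypothesis $mv+nu\geq mn-1$, the total overflow is small enough that it can be distributed across the rows: the two enhanced rows $i\in\{1,m\}$ of capacity $v$ absorb one extra unit apiece, while the $m-2$ interior rows of capacity $v-1$ take the rest in an order chosen so that no interior row exceeds $v-1$ charges.

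The main obstacle will be the tightness of the arithmetic. A direct computation shows $|A_\pi|=mn-2(m-1)$ while $\sum_i\theta(x_i)+\sum_j\theta(y_j)=mv+nu-2(m-1)$, so the total capacity exceeds the demand by exactly $mv+nu-mn$, which under the grid hypothesis is at least $-1$; in the extremal case $mv+nu=mn-1$ there is a one-unit deficit. This is precisely the slack that the colorable condition from \ref{colorable_def} must absorb, in the same spirit as the $-1$ endpoint bonus built into Definition \ref{acceptable}: one distinguished pair of $A_\pi$ is handled by the closing edge of the eventual Hamiltonian cycle rather than by a charge. Identifying this distinguished pair (naturally taken to be the one that bridges the endpoint $x_m$ back through $(0,0)$ to $x_1$) and then verifying the remaining row and column inequalities reduces the proof to a routine case split on $r=0$ versus $r>0$ and on $j<r$ versus $j\geq r$; the hypothesis $mv\geq n-1$ is what rules out pathological row overloads at $i=1$.
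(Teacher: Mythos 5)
Your strategy (an explicit greedy coloring, rather than the paper's route of verifying the subset inequality of Lemma \ref{colorable_def} directly) could in principle work, but as written it has two genuine problems. First, the bookkeeping that drives your argument is off by one. For $r>0$ the path $\pi$ makes $m$ visits to the $x_i$'s but only $m-1$ visits to the $y_j$'s (it ends at $x_m$), so $\sum_j d(y_j)=m-1$, not $m$. Consequently $\sum_i\theta(x_i)+\sum_j\theta(y_j)=(mv-(m-2))+(nu-(m-1))=mv+nu-2m+3$, while $\card{A_\pi}=mn-2(m-1)$, so the slack is $mv+nu-mn+1\geq 0$ under the hypothesis $mv+nu\geq mn-1$: the ``one-unit deficit'' in the extremal case is a phantom. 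Your proposed remedy for it --- exempting a distinguished pair of $A_\pi$ on the grounds that the closing edge of the eventual Hamiltonian cycle will handle it --- would in any case be illegitimate: the proposition asserts colorability of $A_\pi$ with the weights $\theta$, and colorability (Lemma \ref{colorable_def}) makes no allowance for leaving a cell uncolored; you would be proving a weaker statement than the one the later results rely on.

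Second, and more seriously, the key step of the greedy scheme is asserted rather than proved. After column $j$ covers at most $\theta(y_j)$ of its missing cells, the leftover cells do not cascade to rows of your choosing: each uncovered cell $(i,j)$ charges its own row $i$, so whether the rows can absorb the overflow is a bipartite feasibility question, not a comparison of global totals (total capacity $\geq$ total demand is necessary but far from sufficient). The freedom you do have --- which cells of a column to push to the rows --- makes this exactly the per-subset condition $\sum_{i\in\pi_1(B)}\theta(x_i)+\sum_{j\in\pi_2(B)}\theta(y_j)\geq\card{B}$ for all $B\subset A_\pi$, i.e.\ precisely Lemma \ref{colorable_def}. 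That is the route the paper takes: it checks the maximal subset explicitly using $mv+nu\geq mn-1$ and notes the remaining subsets are handled by the same computation (together with the hypotheses $mv\geq n-1$, $nu\geq m-1$). In your write-up the corresponding claim (``the total overflow \ldots can be distributed across the rows in an order chosen so that no interior row exceeds $v-1$ charges'') is exactly the nontrivial content of the proposition, and it is left unargued; fixing the arithmetic and then proving this distributability claim would essentially reproduce the subset-inequality verification anyway.
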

\begin{proof}
 As a result of \ref{colorable_def} we have to prove that for any subset $B \subset A_{\pi}$ we have the inequality:
 \[\displaystyle{ \sum_{i \in \pi_1(B)} u_i + \sum_{j \in \pi_2(B)} v_j \geq \card{B}} \]
 \[\mbox{ or } \displaystyle{ \sum_{i \in \pi_1(B)} \theta(x_i) + \sum_{j \in \pi_2(B)} \theta(y_j) \geq \card{B}}\]
 To deal with the case when $1,m \in \pi_1(B)$  we see that the maximum cardinality of such a $B$ is the size of the grid minus the number of edges appearing in the path $\pi$. Clearly there are $2m-1$ edges in the path $\pi$ so the maximum size of $B$ is $mn -2m +1$ in such case the left hand side becomes: $(v-1)(m-2)+2v + (u-s)(n-r) + (u-s-1)r= mv+nu -2m +2$ and since we have $mv + nu \geq mn -1$ we have the desired colorability in this case. The general case is an easy reiteration of this argument.

\end{proof}

Let us summarize the above in this following theorem.
\begin{thm}\label{acceptability_main}
 The associated complete bipartite to the grid $Grid_{m,n}^{v,u}$ has an acceptable path if $mv \geq n-1 , nu \geq m-1$ and $mv +nu \geq mn -1$.
\end{thm}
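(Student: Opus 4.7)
The plan is to exhibit the explicit path $\pi$ constructed just above and certify that it satisfies Definition \ref{acceptable}. This splits into two subgoals: first, verify that $\pi$ is a $w$-Hamiltonian path in $K_{m,n}$; second, produce the required partition $E \setminus \mathrm{edges}(\pi) = A \sqcup B$ with the prescribed load bounds at every vertex.

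For the first subgoal, the two preceding lemmas already do the work. They give $d(x_i) = 1 \le v = w(x_i)$ for every $i$ and $d(y_j) \le u = w(y_j)$ for every $j$, where the bound on $d(y_j)$ is exactly where the hypothesis $nu \ge m-1$ is used. Since $\pi$ visits every vertex of $K_{m,n}$ (by construction, as $m = sn + r$ with $ns+r = m$), it is a $w$-Hamiltonian path starting at $v_1 = x_1$ and ending at $x_m$.

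For the second subgoal, invoke the colorability proposition proved immediately before the theorem. Regarding $A_\pi$ as a subset of the grid $[m] \times [n]$ with residual weights $u_i = \theta(x_i)$ and $v_j = \theta(y_j)$, colorability yields a map $\chi : A_\pi \to V_1 \cup V_2$ with $\chi(i,j) \in \{x_i, y_j\}$ and
\[
  |\chi^{-1}(x_i)| \le \theta(x_i), \qquad |\chi^{-1}(y_j)| \le \theta(y_j).
\]
Declare $B := \chi^{-1}(V_1)$ and $A := \chi^{-1}(V_2)$, so that $B_i = \{j : \chi(i,j) = x_i\}$ and $A_j = \{i : \chi(i,j) = y_j\}$. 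The acceptability inequalities now unwind mechanically. For any $y_j$, $n^\pi(y_j) + |A_j| = d(y_j) + |A_j| \le d(y_j) + \theta(y_j) = u = w(y_j)$. For any $x_i$ other than $x_1$, $n^\pi(x_i) + |B_i| = 1 + |B_i| \le 1 + (v-1) = v = w(x_i)$. For the distinguished vertex $v_1 = x_1$, the inequality becomes $n^\pi(x_1) + |B_1| - 1 = |B_1| \le v = w(x_1)$, using the enhanced residual $\theta(x_1) = v$.

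The step I would expect to carry most of the weight of the argument is the colorability proposition, not the assembly above. Its Hall-type defect inequality has to be verified for every $B \subseteq A_\pi$, and the tight case, where $\{1,m\} \subseteq \pi_1(B)$ and $\pi_2(B) = [n]$, is precisely what forces the global hypothesis $mv + nu \ge mn - 1$; the marginal hypotheses $mv \ge n-1$ and $nu \ge m-1$ handle the degenerate cases in which $B$ projects onto a single row or column. Once colorability is in hand, the present theorem is just the bookkeeping carried out in the previous paragraph.
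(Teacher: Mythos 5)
Your proposal is correct and is essentially the paper's own argument: the paper proves Theorem \ref{acceptability_main} by exactly this assembly---the explicit zig-zag path $\pi$, the lemmas giving $d(x_i)=1$ and $d(y_j)\leq u$ (via $nu\geq m-1$), the residual weights $\theta$, the colorability of $A_\pi$ (where $mv+nu\geq mn-1$ enters), and the colorable-iff-acceptable correspondence from the color-game subsection, which is the bookkeeping you spell out with $A=\chi^{-1}(V_2)$, $B=\chi^{-1}(V_1)$. The one loose point in your unwinding---at the terminal vertex $x_m$ you use $\theta(x_m)=v$, although Definition \ref{acceptable} grants the $-1$ slack only at $v_1$, so your stated bound $1+\lvert B_m\rvert\leq v$ does not literally follow there---is inherited verbatim from the paper's own bookkeeping and is not a divergence from its proof.
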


\subsection{Color game} In this section we will build up the subsets $A,B$ for the above path using a game we will call color game. 
Let us consider a subset $A \subset [m]\times [n]$ where $[m]=\{1,2,3\ldots,m\}$, and there are natural numbers $u_1,u_2,u_3,\ldots ,u_n$ and $v_1,v_2,v_3, \ldots, v_m$. 
The points $(i,j) \in A$ could be colored by either color red if $u_i \geq 1$ and if it is colored red then $u_i $ is reset by $u_i-1$ and the point 
could be colored blue if $v_j \geq 1$ and the number $v_j$ is reset by $v_j-1$. The goal is to color all the points on $A$ with some colors. If there is a 
legal coloring of the subset with the given weights, so that no weight is allowed to go less than zero, then the subset is called colorable with the given weights.
\begin{lem}\label{colorable_def}
 A subset $A \subset [m] \times [n]$ is colorable if and only if for every subset $B \subset A$ we have $\displaystyle{ \sum_{i \in \pi_1(B)} u_i + \sum_{j \in \pi_2(B)} v_j \geq \card{B}}$.
\end{lem}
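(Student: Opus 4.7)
The plan is to recognize this as a bipartite matching / Hall-type condition and prove the two directions separately. Necessity is a direct counting argument: if a legal coloring of $A$ exists and $B \subseteq A$ is any subset, then each point $(i,j) \in B$ consumes one unit from either $u_i$ (if colored red) or $v_j$ (if colored blue). The total number of units consumed is $|B|$, and these units can only come from the rows in $\pi_1(B)$ and the columns in $\pi_2(B)$. Hence $\sum_{i \in \pi_1(B)} u_i + \sum_{j \in \pi_2(B)} v_j \geq |B|$.

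For sufficiency, the approach is to reduce the coloring problem to a bipartite matching problem and apply Hall's marriage theorem. Construct an auxiliary bipartite graph $H$ with parts $L$ and $R$ as follows. Let $L = A$. On the right, for each $i \in [m]$ introduce $u_i$ distinct \emph{red slots} $r_{i,1}, r_{i,2}, \ldots, r_{i,u_i}$, and for each $j \in [n]$ introduce $v_j$ distinct \emph{blue slots} $b_{j,1}, b_{j,2}, \ldots, b_{j,v_j}$; let $R$ be the union. For each point $(i,j) \in A$ connect it in $H$ to all $u_i$ red slots of row $i$ and to all $v_j$ blue slots of column $j$. A valid coloring of $A$ with the given weight budgets corresponds precisely to a matching in $H$ saturating the left side $L$: assigning $(i,j)$ to a red slot of row $i$ means coloring it red and consuming one unit of $u_i$, and symmetrically for blue. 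The distinctness of slots guarantees that the capacities $u_i$, $v_j$ are respected.

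Now apply Hall's theorem to $H$. A left-saturating matching exists iff for every $B \subseteq L = A$ the neighborhood satisfies $|N_H(B)| \geq |B|$. By the construction of $H$, the neighborhood $N_H(B)$ is exactly the set of red slots belonging to rows in $\pi_1(B)$ together with the set of blue slots belonging to columns in $\pi_2(B)$. These two families are disjoint by construction, so
\[
|N_H(B)| = \sum_{i \in \pi_1(B)} u_i + \sum_{j \in \pi_2(B)} v_j.
\]
The hypothesis in the lemma is exactly $|N_H(B)| \geq |B|$ for every $B$, so Hall's condition holds and a left-saturating matching exists, yielding the required coloring.

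The main (in fact the only) obstacle is spotting the correct reduction, namely splitting each row and column weight into individually distinguishable slots so that the coloring becomes a matching; once this is done both directions are immediate, one by counting and the other by Hall's theorem. No delicate case analysis is required, and the argument is robust to the shape of $A$.
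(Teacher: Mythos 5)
Your proof is correct, and it takes a genuinely different route from the paper. The paper proves sufficiency by a greedy algorithm with implicit induction: pick a column meeting $A$ in the most points, colour as many of those points red as the available budget allows (taking the columns of smallest remaining weight first), colour the rest blue, delete that column and recurse, with an auxiliary lemma (\ref{color_1}) asserting that the Hall-type inequality survives the deletion; several of these steps (notably ``this can be done since the condition for this subset guarantees that'' and the proof of \ref{color_1}) are left essentially unverified. You instead expand each row budget $u_i$ into $u_i$ distinguishable red slots and each column budget $v_j$ into $v_j$ blue slots, observe that a legal colouring is exactly a matching of $A$ into the slots saturating $A$, compute $\lvert N_H(B)\rvert=\sum_{i\in\pi_1(B)}u_i+\sum_{j\in\pi_2(B)}v_j$, and invoke Hall's theorem; your necessity direction is the same counting argument as the paper's ``obvious'' one, made explicit. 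What each approach buys: the paper's greedy procedure is constructive and matches the algorithmic spirit of the section (it produces an explicit colouring one column at a time), whereas your reduction is shorter, complete, and rigorous, at the price of importing Hall's theorem rather than giving an explicit colouring procedure. Your argument also generalizes painlessly to arbitrary per-row and per-column budgets and arbitrary shapes of $A$, which is exactly the setting the lemma is stated in.
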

\begin{proof}
 The necessary condition is obvious. For the sufficient condition let us give an algorithm:
 \begin{enumerate}
  \item Pick a column $i$ such that $\pi^{-1}(i)$ has the maximum cardinality.
  \item If the color available on the $X$ axis is not zero color the point with red from $(i,j_1), (i,j_2), (i,j_3) \ldots, (i , j_r)$ where weights $v_{j_1} \leq v_{j_2} \leq \ldots v_{j_r}$.
  \item Color the rest of the points on $\pi^{-1}(i)$ blue. This can be done since the condition for this subset guarantees that.
  \item Now observe that $A'= A \setminus \pi^{-1}(i)$ is a subset of a smaller grid and by the lemma \ref{color_1}, $A'$ could be colored by the remaining colors.
 \end{enumerate}

\end{proof}
\begin{lem}\label{color_1} Let $A'$ be as above then for every subset $B \subset A'$ we have \[\displaystyle{ \sum_{i \in \pi_1(B)} x_i + \sum_{j \in \pi_2(B)} y_j \geq \card{B}}.\]
 
\end{lem}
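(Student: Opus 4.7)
The plan is to show, by contradiction, that the residual weights on $A'$ still satisfy the Hall-type inequality of Lemma~\ref{colorable_def}. Suppose $B' \subset A'$ violates the desired conclusion; with the residual weights $x_{i'} = u_{i'}$ (and $x_i = 0$) and $y_j = v_j - \mathbf{1}_{j \in C}$, where $R$ and $C$ denote the red- and blue-colored rows of column $i$, the assumed violation takes the form
\[
\sum_{i' \in \pi_1(B')} u_{i'} \;+\; \sum_{j \in \pi_2(B')} v_j \;<\; |B'| + |\pi_2(B') \cap C|.
\]

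The strategy is to extend $B'$ to a set $B = B' \cup D \subset A$ with $D \subset \pi_1^{-1}(i) \cap A$, apply the hypothesis that $A$ satisfies the Hall-type inequality to $B$, and combine the resulting inequality with the assumed violation to reach a contradiction. First I would try the natural candidate $D = \{(i,j) : j \in \pi_2(B') \cap (R \cup C)\}$, which preserves $\pi_2(B) = \pi_2(B')$; the Hall inequality on $A$ then yields the contradiction whenever $|\pi_2(B') \cap R| \ge u_i$. In the complementary case I would enlarge $D$ to include further rows $(i,j)$ with $j \notin \pi_2(B')$, and control the additional $v_j$-contributions using two structural facts about the algorithm: (a) the greedy choice of $R$ as the $u_i$ rows of smallest $v_j$ in column $i$, so that $v_j \le \min_{j' \in C} v_{j'}$ for every $j \in R$; and (b) the Hall inequality on $A$ applied to the subset $\{(i,j) \in A : v_j = 0\}$, which forces $|\{j : (i,j)\in A,\; v_j=0\}| \le u_i$ and places all $v_j = 0$ rows of column $i$ inside $R$.

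The main obstacle will be the quantitative accounting in the complementary regime: $D$ must be chosen so that the introduced $v_j$ terms are strictly bounded by the slack available in Hall's inequality on $A$. The maximum-cardinality selection of column $i$ enters here because it bounds $|D| \le r$ and thereby pins down how much extension is possible before the cost of introducing new rows outweighs the gain $u_i$. Once the contradiction is obtained in the hard case, the desired inequality on $A'$ follows.
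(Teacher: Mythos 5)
Your overall strategy---augment a subset of $A'$ by points of the removed column $i$ and play the Hall inequality on $A$ against the residual weights---is the right one, and your first case ($\card{\pi_2(B')\cap R}\geq u_i$) checks out. But the ``complementary regime'' that you leave as the main obstacle is where all the content sits, and your plan for it reaches for the wrong levers: neither the maximum-cardinality choice of the column $i$ nor any bound of the form $\card{D}\leq r$ is needed, and no delicate accounting arises. Simply take $D$ to be the \emph{entire} trace of the column, $D=\{(i,j): j\in R\cup C\}$. If the red budget is not exhausted then $C=\emptyset$, no row weight was decremented, and plain restriction of the hypothesis on $A$ to $B'$ already finishes; otherwise $\card{R}=u_i$, and the inequality for $A$ applied to $B=B'\cup D$ reads
\[
\sum_{i'\in\pi_1(B')}u_{i'}\;+\;u_i\;+\;\sum_{j\in\pi_2(B')}v_j\;+\;\sum_{j\in(R\cup C)\setminus\pi_2(B')}v_j\;\geq\;\card{B'}+u_i+\card{C}.
\]
Every blue row satisfies $v_j\geq 1$ (this is your fact, obtained from the greedy ordering plus the Hall inequality on the $v_j=0$ rows of the column, or just from the legality of the blue colouring in step 3), so the last sum on the left is at least $\card{C\setminus\pi_2(B')}$. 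Cancelling $u_i$ and these terms gives $\sum_{i'\in\pi_1(B')}u_{i'}+\sum_{j\in\pi_2(B')}v_j\geq\card{B'}+\card{C\cap\pi_2(B')}$, which is exactly the residual-weight inequality for $B'$ (the right-hand correction $\card{C\cap\pi_2(B')}$ is the total decrement suffered by the rows of $\pi_2(B')$); no contradiction argument is even needed. So your proposal is completable, but as written it stops short of the decisive step and misattributes the difficulty to the column-selection rule, which plays no role in this lemma.

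For comparison, the paper disposes of the lemma in one sentence, ``realize $B\subset A'$ as a subset of $A$''; taken literally this only yields the inequality for the \emph{original} weights and ignores the decrements caused by the blue points of column $i$---precisely the issue you are rightly worried about. The augmentation-by-the-removed-column argument above is the honest justification of that step, so your instinct about where the real work lies is correct even though your write-up does not yet carry it out.
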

\begin{proof}
 If we realize the subset $B \subset A'$ as a subset of $A$ we get the desired inequality.
\end{proof}

Continuing with the notations from the definition \ref{acceptable} we will denote the associated complete bipartite weighted graph to the grid $Grid_{\underbar{m}}^{\underbar{u}}$ as $K_{m,n}$ with weight $w$. Let $\pi=v_1v_2v_3\ldots v_l$ be a $w$-Hamiltonian path in $K_{m,n}$. Let $E'$ be all the edges encountered in this path, let $A_{\pi}=\{(i,j) | (i,j) \notin E'\}$, and let us take $u_i = w(x_i) - \card{\{l : v_l =x_i\}}$ if $x_i \neq v_1$ and $w(x_i)-\card{ \{ l : v_l =x_i \} } +1$ else, similarly we define $v_j$. In the lemma below we characterize the acceptable paths.
\begin{lem} The subset $A_{\pi}$ is colorable if and only if the path $\pi$ is acceptable.
\end{lem}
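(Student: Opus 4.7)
The plan is to recognize this as essentially a translation lemma: a coloring of $A_{\pi}$ is the same data as a partition $A_{\pi} = A \sqcup B$, and the capacities $u_i, v_j$ were set up precisely so that the colorability inequalities of Lemma~\ref{colorable_def} coincide numerically with the acceptability inequalities of Definition~\ref{acceptable}. So the proof will be obtained by unfolding both definitions in parallel and exhibiting a bijection between the two kinds of data.

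First I would set up the dictionary. Given a partition $A_{\pi} = A \sqcup B$ of the edges of $K_{m,n}$ not appearing on $\pi$, I would read it as a $2$-coloring of the points of $A_{\pi} \subset [m]\times[n]$ by declaring a point $(i,j)$ to be red when $(x_i,y_j)\in B$ and blue when $(x_i,y_j)\in A$. Under this dictionary, the number of red points in row $i$ is exactly $|B_i|$ (edges of $B$ incident to $x_i$), and the number of blue points in column $j$ is exactly $|A_j|$ (edges of $A$ incident to $y_j$). Going the other way, any $2$-coloring of $A_{\pi}$ with red/blue yields a partition $A_{\pi} = A \sqcup B$ by assigning blue points to $A$ and red points to $B$, so the two pieces of data are literally the same.

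Next I would match the budgets. By the definitions preceding the lemma, $u_i = w(x_i) - n^{\pi}(x_i) + \varepsilon_i^x$, where $\varepsilon_i^x = 1$ if $v_1 = x_i$ and $0$ otherwise, and similarly $v_j = w(y_j) - n^{\pi}(y_j) + \varepsilon_j^y$. Then the color-game constraint ``row $i$ receives at most $u_i$ red points'' reads $|B_i| \le w(x_i) - n^{\pi}(x_i) + \varepsilon_i^x$, which rearranges to condition~(1) of Definition~\ref{acceptable}. The symmetric calculation on the $y$-side gives condition~(2). Applying Lemma~\ref{colorable_def} in both directions, $A_{\pi}$ is colorable iff a valid $2$-coloring exists iff a partition satisfying the acceptability inequalities exists iff $\pi$ is acceptable, and the lemma follows.

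The only step that needs care is the $\pm 1$ bookkeeping at the start vertex $v_1$ of $\pi$: the definition of $u_i, v_j$ \emph{adds} $1$ at the start vertex, while the acceptability conditions instead \emph{subtract} $1$ from the right-hand side, and one must verify that these shifts cancel in the rearrangement above. Beyond that arithmetic check there is no substantive obstacle; the content of the lemma is that the color game was designed as a faithful reformulation of acceptability in terms of the residual weights left over after traversing $\pi$.
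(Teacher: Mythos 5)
Your proposal is correct and follows essentially the same route as the paper: both proofs are a direct translation between a red/blue coloring of $A_{\pi}$ and a partition $A\sqcup B$ of the non-path edges, matching the residual budgets $u_i=w(x_i)-n^{\pi}(x_i)(+1\text{ at }v_1)$, $v_j=w(y_j)-n^{\pi}(y_j)(+1\text{ at }v_1)$ against the inequalities in Definition~\ref{acceptable}, with the $\pm 1$ shifts at the start vertex cancelling exactly as you indicate. (Your appeal to Lemma~\ref{colorable_def} is unnecessary, since colorability is by definition the existence of a legal coloring, and your labelling of red with $B$ and blue with $A$ is actually the one consistent with Definition~\ref{acceptable}, whereas the paper's proof swaps the names; neither point affects correctness.)
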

\begin{proof}
 If $A_{\pi}$ is colorable then define $A_l=\{y_i | (l,i) \mbox{ is colored red } \}$ and similarly we define $B_l=\{x_i | (i,l) \mbox{ is colored blue } \}$. Note that since $A_{\pi}$ is colorable we have
 \begin{enumerate} 
 
\item $w(x_l) - n^{\pi}(x_l) \geq \card{A_l}$ if $x_l \neq v_1$ else $w(x_l)-n^{\pi}(x_l) +1 \geq \card{A_l}$.
 \item $w(y_l) - n^{\pi}(y_l) \geq \card{B_l}$ if $y_l \neq v_1$ else $w(y_l) - n^{\pi}(y_l) +1 \geq \card{B_l}$.
 \end{enumerate}
 This finishes the proof of the acceptability and for the reverse part we color the vertices in $A$ red and those in $B$ blue, the condition for acceptability permits us to have a legal coloring with these colors.
\end{proof}

\begin{cor}\label{main_corollary}
 The grid $G\mbox{rid}_{(m,n)}^{(v,u)}=(V,E,w)$ is $w-$Hamiltonian if and only if $nu+mv \geq mn -1$ and $nu \geq m-1, mu \geq n-1$.
\end{cor}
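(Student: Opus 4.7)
My plan is to package the corollary as a combination of two pieces already established in the preceding material: the necessary direction follows from several applications of the generalized cut lemma, and the sufficient direction chains together the acceptable-path theorem with the grid/bipartite equivalence.

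For necessity I would exhibit three natural cuts of the vertex set and apply Lemma \ref{cut lemma} to each. The first cut is the ``cross'' $C_1 = \{(i,j) \in V : i = 0 \text{ or } j = 0\}$; since the only edges in the grid between an interior vertex $(i,j)$ with $i,j \neq 0$ and the rest of the graph land in $C_1$, removing $C_1$ leaves exactly $mn$ isolated interior points. Summing the prescribed weights gives $w(C_1) = 1 + nu + mv$, and the cut lemma then forces $nu + mv \geq mn - 1$. The second cut is the ``row'' $C_2 = \{(0,j) : 0 \leq j \leq n\}$, whose removal breaks the grid into $m$ components indexed by the nonzero first coordinate; this has weight $1 + nu$, giving $nu \geq m - 1$. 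The symmetric ``column'' cut yields the third inequality $mv \geq n - 1$.

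For sufficiency I would first observe that the weighted grid $Grid_{(m,n)}^{(v,u)}$ is isomorphic to $Grid_{(n,m)}^{(u,v)}$ by swapping the two coordinates, so without loss of generality I may assume $n \geq m$. Theorem \ref{acceptability_main} then provides an acceptable $w$-Hamiltonian path in the associated complete bipartite graph $K_{m,n}$ as soon as the three numerical inequalities in the statement hold. Proposition \ref{grid_bipartite_equivalence} converts such an acceptable path into a genuine $w$-Hamiltonian cycle in the grid, completing the sufficient direction.

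The main obstacle is really bookkeeping rather than a new idea: I must be careful that the component counts for $C_1, C_2$ and the third (column) cut are computed correctly from the product-of-stars structure, and that the symmetry argument used to drop the $n \geq m$ hypothesis of Theorem \ref{acceptability_main} genuinely preserves the weight function (so that the numerical hypotheses in the corollary are symmetric in the two pairs $(m,v)$ and $(n,u)$). Once these verifications are written out, the corollary is immediate from the two theorems.
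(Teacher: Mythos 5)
Your proof is correct and follows essentially the same route as the paper: necessity via the generalized cut lemma applied to the cross, row, and column cuts (the content of Lemma \ref{simple_cuts} and Theorem \ref{main_necessary}), and sufficiency by combining Theorem \ref{acceptability_main} with Proposition \ref{grid_bipartite_equivalence}. Your explicit care about the coordinate-swap symmetry (to remove the $n \geq m$ assumption implicit in the construction behind Theorem \ref{acceptability_main}) and about the exact weight and component counts only makes explicit what the paper leaves implicit; it does not change the argument.
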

\begin{proof}
 Necessary condition was shown in \ref{main_necessary}, and the sufficient condition follows from \ref{acceptability_main} and \ref{grid_bipartite_equivalence}.
\end{proof}

\begin{thm}\label{main}
 The powergraph of the Abelian groups $(\mathbb{Z}_p)^n \times (\mathbb{Z}_q)^m$, where $p,q$ are distinct primes, is Hamiltonian if and only if $m(q-1) \geq n-1$ and $n(p-1) \geq m-1$ and $m(q-1) + n(p-1) \geq mn -1$. 
\end{thm}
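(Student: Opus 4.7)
The plan is to reduce the Hamiltonicity question for $P_G$ with $G=(\mathbb{Z}_p)^n\times(\mathbb{Z}_q)^m$ to the weighted Hamiltonicity of an explicit $2$-dimensional cluster grid, and then invoke Corollary~\ref{main_corollary}.

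First, I would split the power graph along the coprime factorization. Because $\gcd(p^n,q^m)=1$, the earlier theorem on $p$-$q$ groups collapses the power-graph product $\boxtimes$ to the ordinary graph product, giving $P_G = P_{(\mathbb{Z}_p)^n}\times P_{(\mathbb{Z}_q)^m}$. The lemma on cluster graphs of products then yields $C(P_G)=C(P_{(\mathbb{Z}_p)^n})\times C(P_{(\mathbb{Z}_q)^m})$ as vertex-weighted graphs, with cluster-size weights.

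Next, I would identify each factor cluster graph. In the elementary abelian group $(\mathbb{Z}_p)^k$ every nonzero element has order $p$, so by Lemma~\ref{cluster_def} every nontrivial cluster is a single $(\mathbb{Z}/p\mathbb{Z})^*$-orbit, of size $p-1$ by Lemma~\ref{cluster_size}, and each such cluster is joined by an irreducible edge to the identity cluster $\{0\}$ of weight $1$. Hence $C(P_{(\mathbb{Z}_p)^n})$ is a weighted star with a central vertex of weight $1$ and $n$ leaves of weight $p-1$ (in the theorem's indexing convention), and symmetrically for $C(P_{(\mathbb{Z}_q)^m})$. Taking the product of these two stars produces precisely the grid $Grid_{n,m}^{p-1,q-1}$ of Section~4: weight $1$ at the origin, weight $q-1$ on the vertical axis, weight $p-1$ on the horizontal axis, and weight $(p-1)(q-1)$ at every interior vertex.

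Then I would translate Hamiltonicity of $P_G$ back into weighted Hamiltonicity of this grid. Using the isomorphism $G\simeq\widetilde{C(G)^w}$ from the cluster-representation lemma and the equivalence theorem that $\widetilde{H}^w$ is Hamiltonian iff $H$ is $w$-Hamiltonian, we deduce that $P_G$ is Hamiltonian iff $Grid_{n,m}^{p-1,q-1}$ is $w$-Hamiltonian. Substituting $u=p-1$ and $v=q-1$ directly into Corollary~\ref{main_corollary} yields the three inequalities $n(p-1)+m(q-1)\geq nm-1$, $n(p-1)\geq m-1$, and $m(q-1)\geq n-1$, and conversely if any of these fails then the grid is not $w$-Hamiltonian and hence $P_G$ is not Hamiltonian.

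The main technical obstacle is already absorbed into Corollary~\ref{main_corollary}: necessity comes from the generalized cut lemma, while sufficiency is the deep direction built from the acceptable-path construction in the associated complete bipartite graph together with the color-game algorithm. Once that is in hand, the present theorem is essentially bookkeeping, and the only things requiring genuine verification are the identification of each $p$-primary factor's cluster graph as a weighted star, and the fact that the product of two such stars is exactly the grid whose Hamiltonicity was characterized.
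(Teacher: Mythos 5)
Your overall route is the same as the paper's: the paper's entire proof of Theorem \ref{main} is the citation of Corollary \ref{main_corollary}, and you have merely made the implicit reduction explicit (coprime orders give $P_G=P_{(\mathbb{Z}_p)^n}\times P_{(\mathbb{Z}_q)^m}$, cluster graphs multiply, pass to $\widetilde{C^w}$, then apply the grid criterion). That skeleton is fine, but the one step you yourself single out as ``requiring genuine verification'' is wrong as you state it. The cluster graph of $(\mathbb{Z}_p)^n$ is indeed a star with center the identity cluster, but its leaves are the subgroups of order $p$, of which there are $\frac{p^n-1}{p-1}=1+p+\cdots+p^{n-1}$, not $n$; the paper's own figure of the cluster graph of $\mathbb{Z}_3\times\mathbb{Z}_3\times\mathbb{Z}_2$ shows exactly this (four leaves for $(\mathbb{Z}_3)^2$, not two). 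Consequently the grid you actually obtain is $Grid_{N,M}^{p-1,q-1}$ with $N=\frac{p^n-1}{p-1}$ and $M=\frac{q^m-1}{q-1}$, and Corollary \ref{main_corollary} yields the conditions $M(q-1)\geq N-1$, $N(p-1)\geq M-1$ and $N(p-1)+M(q-1)\geq NM-1$, i.e.\ inequalities involving $p^n-1$ and $q^m-1$, which coincide with the stated $m(q-1)\geq n-1$, $n(p-1)\geq m-1$, $m(q-1)+n(p-1)\geq mn-1$ only when $n=m=1$.

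The discrepancy is not cosmetic bookkeeping. Take $G=(\mathbb{Z}_2)^3\times\mathbb{Z}_3$: the stated inequalities hold ($1\cdot 2\geq 2$, $3\cdot 1\geq 0$, $2+3\geq 2$), yet the cluster grid is $Grid_{7,1}^{1,2}$, and deleting the identity together with the two elements of order $3$ (three vertices) leaves seven components (each involution with its two order-$6$ companions), so by the generalized cut lemma (Lemma \ref{cut lemma}) the power graph is not Hamiltonian, and not even traceable. So the phrase ``in the theorem's indexing convention'' cannot absorb the error: either $n$ and $m$ in the statement must be reinterpreted as the numbers of order-$p$ and order-$q$ subgroups (equivalently, the result should be stated for products of weighted stars, i.e.\ for the grids themselves), or the inequalities must be rewritten with $N$ and $M$ as above. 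As written, your argument proves the corrected statement rather than the quoted one, and the proposal needs to flag and repair this identification instead of passing it off as the easy part.
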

\begin{proof}
 Follows from the corollary \ref{main_corollary}
\end{proof}

\bibliographystyle{abbrv}
\bibliography{power}

\end{document}